\tikzset{mynode/.style={draw, very thick, circle, minimum size=1cm},
    myarrow/.style={very thick, -Triangle}}
    \definecolor{Green}{rgb}{0,1,0}
    \definecolor{Red}{rgb}{1,0.0,0.0}
    \definecolor{Blue}{rgb}{0,0.0,1}
\theoremstyle{plain}
\newtheorem{theorem}{Theorem}
\numberwithin{theorem}{section}
\newtheorem{lemma}[theorem]{Lemma}
\newtheorem{proposition}[theorem]{Proposition}
\newtheorem{corollary}[theorem]{Corollary}
\newtheorem{conjecture}[theorem]{Conjecture}
\theoremstyle{definition}
\newtheorem{remark}[theorem]{Remark}
\newtheorem{question}[theorem]{Question}
\newtheorem{example}[theorem]{Example}
\newcommand{\ra}{\rightarrow}
\newcommand{\bdry}{\partial}
\newcommand{\Scal}{\mathcal{S}}
\newcommand{\R}{\mathbb{R}}
\newcommand{\e}{\varepsilon}
\title{Generating Infinitely Many Hyperbolic Knots with Plats}
\author{ Carolyn Engelhardt and Seth Hovland }
\begin{document}
\maketitle

\begin{abstract} 
%Revisit this
\noindent In this paper we study the relationships between links in plat position, the dynamics of the braid group, and Heegaard splittings of double branched covers of $S^3$ over a link.  These relationships offer new ways to view links in plat position and a new tool kit for analyzing links.  In particular, we show that the Hempel distance of the Heegaard splitting of the double branched cover obtained from a plat is a lower bound for the Hempel distance of that plat.  Using the  Hempel distance of a knot in bridge position and pseudo-Anosov braids we obtain our main result: a construction of infinitely many sequences of prime hyperbolic $n$-bridge knots for $n \geq 3$, infinitely many of which are distinct. We consider known results to show that the knot genus and hyperbolic volume of these knots are bounded below by a linear function.    
\end{abstract}

% keywords can be removed
{\bf Key Words: Hempel Distance, Braid Groups, Plat Closure, pseudo-Anosov braids} 

%%%%%%%%%%%%%%%%%%%%%%%%%%%%%%%
\section{Introduction}\label{sec:introduction}
Until recently, it was widely believed that generic prime links are hyperbolic.  Indeed, this holds for prime knots of small crossing number \cite{smalllinks}.  However, Malyutin proved in \cite{malyutin} that, counter-intuitively, the proportion of hyperbolic links as crossing number increases does \emph{not} approach 1.  This leads to the question: How can one construct prime hyperbolic knots of high crossing number?  This question has been approached, for instance, in \cite{hyperbolic_modifications} by modifying hyperbolic links with at least $2$ components combinatorially. 

In this paper, we approach this question by studying knots as \emph{plat closures} of elements of the braid group (called \emph{plats}).  By Schubert's work in \cite{Schubert1954berEN}, every link in $S^3$ admits a bridge position for some $n$, and therefore can be represented as the plat closure of a word in the braid group $B_{2n}$. % or the spherical braid group $B(S^2)_{2n}$.  Unlike closed braids, which are the \emph{braid closure} of a braid word, plats BLAH
Links in plat position admit a measure called the \emph{Hempel distance}, which is an analog of the Hempel distance for Heegaard splittings of $3$--manifolds \cite{HEMPEL2001}.  Bachman and Schliemer showed in \cite{Bachman2003DistanceAB} that if the Hempel distance of a knot in bridge position is greater than $3$, then the knot is hyperbolic.  This gives us a criterion to look for when constructing hyperbolic knots.  %To construct a family of high--crossing hyperbolic knots, we want to look for knots with high Hempel distance.  

%HOW WE FIND KNOTS WITH HIGH HEMPEL DISTANCE

Representing a knot $K$ the plat closure of the braid word $\beta$ allows us to explicitly construct a Heegaard splitting of $Y(K)$, the double branched cover of $S^3$ over $K$.  Using this construction, we can give a lower bound on the Hempel distance of a knot. 

%%%%%
\begin{restatable}[]{prop}{knotdistanceboundedbelow}
\label{introprop:knot_distance_bounded_below}
If $(K, S)$ is a bridge splitting of a knot in $S^3$ and $H=H_+ \cup_\phi H_-$ is the Heegaard splitting for the double cover of $S^3$ over $K$, $Y(K)$.  Then $d(Y, H) \leq d(K, S)$.
\end{restatable}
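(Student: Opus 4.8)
The plan is to promote the branched double cover to an honest covering map between the splitting surfaces and then to lift a distance-realizing geodesic in the bridge curve complex to a path of no greater length in the Heegaard curve complex. Write $b = K \cap S$ for the $2n$ marked points, so $S$ is a $2n$-punctured sphere and the two sides of the bridge splitting are trivial tangles $(B_+,\tau_+)$ and $(B_-,\tau_-)$. The Heegaard surface $\Sigma$ of $H$ is precisely the double cover of $S$ branched over $b$, and $H_\pm$ are the double branched covers of $(B_\pm,\tau_\pm)$; restricting to the boundary spheres yields a degree-two branched cover $p:\Sigma \to S$ with branch locus $b$, where $\Sigma$ has genus $n-1$. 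Taking preimages $p^{-1}$ relates vertices of $\mathcal{C}(S)$ to those of $\mathcal{C}(\Sigma)$, and the one structural feature I need is immediate: if $c \cap c' = \emptyset$ then $p^{-1}(c) \cap p^{-1}(c') = \emptyset$, so disjoint curves lift to disjoint curves.

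The argument rests on two lemmas about this lifting. First, \emph{a lift of an essential curve is essential}: an essential simple closed curve $c$ on $S$ encircles $k$ marked points with $2 \le k \le 2n-2$, and by the mod-$2$ monodromy of $p$ it lifts to two curves when $k$ is even and to one (connected double cover) when $k$ is odd. In either case one computes via Riemann--Hurwitz that the double cover of the disk bounded by $c$ (branched over those $k$ points) has Euler characteristic $2-k$; since $k \ge 2$ this region is never a disk, and the same holds for the complementary region with its $2n-k \ge 2$ branch points, so no component of $p^{-1}(c)$ bounds a disk in $\Sigma$. Second, \emph{the tangle disk sets map into the handlebody disk sets}: if $c$ bounds a compressing disk $D$ in the tangle complement $B_\pm \setminus N(\tau_\pm)$, then $c$ separates the feet of $\tau_\pm$ (no arc can cross the sphere $D \cup \Delta$, as its interior misses both $D$ and $\partial B_\pm$), so $c$ encircles an even number of marked points and hence lifts to two curves. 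Because $D$ avoids the branch locus, $p^{-1}(D)$ is a pair of disjoint disks in $H_\pm$ with boundaries $p^{-1}(c)$, which by the first lemma are essential; thus each component of $p^{-1}(c)$ bounds a compressing disk of $H_\pm$, i.e. $p^{-1}(c) \subset \mathcal{D}_\pm^H$.

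With these in hand the assembly is routine. Take a geodesic $c_0,c_1,\dots,c_m$ in $\mathcal{C}(S)$ realizing $d(K,S)=m$, with $c_0$ in the disk set of $(B_+,\tau_+)$, $c_m$ in the disk set of $(B_-,\tau_-)$, and consecutive curves disjoint. Choose a component $\tilde c_i$ of $p^{-1}(c_i)$ for each $i$; by the first lemma every $\tilde c_i$ is a legitimate vertex of $\mathcal{C}(\Sigma)$, by the second lemma $\tilde c_0 \in \mathcal{D}_+^H$ and $\tilde c_m \in \mathcal{D}_-^H$, and by disjointness of preimages $d_{\mathcal{C}(\Sigma)}(\tilde c_i,\tilde c_{i+1}) \le 1$. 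Hence $\tilde c_0,\dots,\tilde c_m$ is a path in $\mathcal{C}(\Sigma)$ joining the two handlebody disk sets of length at most $m$, so $d(Y,H) \le m = d(K,S)$ (collapsing any repeated or adjacent vertices only shortens the path).

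The main obstacle is the second lemma, namely verifying that a tangle compressing disk genuinely lifts to a \emph{compressing} disk of the handlebody rather than an inessential or branch-locus-meeting disk; this is where the parity count on the feet of the tangle and the Euler-characteristic computation of the first lemma do the real work, and where one must be careful about curves whose preimage is connected versus split. A secondary point to address is that $\mathcal{C}(\Sigma)$ is the usual connected curve complex only for $\Sigma$ of genus at least $2$, i.e. $n \ge 3$; for smaller $n$ the statement should be read with the appropriate convention for the torus case.
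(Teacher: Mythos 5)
Your overall strategy is the same as the paper's: push curves and compressing disks through the branched covering $p\colon \Sigma\to S$, observe that disjointness is preserved and that the tangle disk sets land inside the handlebody disk sets, and conclude that the distance cannot increase. In fact your execution is more careful than the paper's own proof, which cites Birman--Hilden for minimal intersection and simply asserts that preimages ``cannot be further apart'': your edge-by-edge lifting of a distance-realizing path, your Euler-characteristic verification that lifts of essential curves are essential, and your parity argument showing that a tangle compressing disk lifts to a pair of honest compressing disks are exactly what is needed to make that assertion rigorous. These correspond to the paper's Lemmas~\ref{lem:arc_curve_complex_embed} and~\ref{lem:disk_complex_embed}.

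There is, however, one genuine gap relative to the definition of $d(K,S)$ used here. The bridge distance is measured in the \emph{arc-and-curve} complex $\mathcal{AC}(X_{2n})$, so a geodesic realizing $d(K,S)=m$ may pass through essential properly embedded \emph{arcs} joining marked points, not only simple closed curves. Both of your lemmas are phrased for simple closed curves (the Riemann--Hurwitz count is for a curve encircling $k$ marked points), so the step ``take a geodesic $c_0,\dots,c_m$ in $\mathcal{C}(S)$ realizing $d(K,S)=m$'' is not available: the shortest path that avoids arcs can be strictly longer than $d(K,S)$, and bounding $d(Y,H)$ by \emph{that} length gives an inequality in the wrong direction. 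The missing case is exactly the first sentence of the paper's Lemma~\ref{lem:arc_curve_complex_embed}: an essential arc between two marked points lifts to a single essential, nonseparating simple closed curve (the double of the arc, branched over its two endpoints), and two disjoint vertices of $\mathcal{AC}(X_{2n})$ --- which in particular share no endpoints --- still have disjoint lifts. Adding this case to your first lemma and lifting the $\mathcal{AC}$-geodesic vertex by vertex closes the gap; your closing caveat about needing $n\ge 3$ for $\mathcal{C}(\Sigma)$ to be the usual connected curve complex is well taken and applies to the paper's statement as well.
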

%%%%%%%

%\begin{proposition}\label{introprop:knot_distance_bounded_below}
%    If $(K, S)$ is a bridge splitting of a knot in $S^3$ and $H=H_+ \cup_\phi H_-$ is the Heegaard splitting for the double cover of $S^3$ over $K$, $Y(K)$.  Then $d(Y, H) \leq d(K, S)$.
%\end{proposition}
%\vspace{1em}
%%%%%

\noindent In order to show a knot $K$ is hyperbolic, we start by finding the Hempel distance of $Y(K)$.  
It was shown in \cite{DistancesofHeegaardsplittings} for pseudo-Ansov maps $\varphi$ on surfaces with genus $g \geq 2$, if $\varphi$ meets certain criteria, then the Hempel distances of the Heegaard splitting with splitting maps $\varphi^m$ grow as a linear function of $m$.

Using Proposition~\ref{introprop:knot_distance_bounded_below}, we can then construct a family of $n$--bridge hyperbolic knots whose Hempel distance is bounded below by a linear function in $m$ by fixing a pseudo-Anosov braid word $\beta$ which meets certain technical conditions and taking the plat closures of $\beta^m$.  %These knots are certain powers of a fixed pseudo-Anosov braid $\beta.$ 

\begin{restatable}[]{thm}{infhypprimeknots}
\label{introthm:infhypprimeknots}
 For $n\geq 3,$ if $\beta\in B_{2n}$ is a generic, pseudo-Anosov braid with $1$-component plat closure, then plat closures of certain powers of $\beta$ will generate a sequence of infinitely many distinct prime hyperbolic knots whose volumes are strictly increasing.  
\end{restatable}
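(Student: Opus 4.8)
The plan is to build the knots as plat closures of powers of a fixed pseudo-Anosov braid and then control three things in sequence: hyperbolicity, primality, and distinctness via growing volume. First I would fix $\beta \in B_{2n}$ as in the hypothesis and consider the sequence of plat closures $K_m$ of $\beta^m$. The key input is the result of \cite{DistancesofHeegaardsplittings}: because $\beta$ induces a pseudo-Anosov mapping class on the relevant genus $g \geq 2$ surface (here $g = n-1$ after passing to the double branched cover, where $n \geq 3$ guarantees $g \geq 2$), the Hempel distances $d(Y(K_m), H_m)$ of the Heegaard splittings with gluing map built from $\varphi^m$ grow linearly in $m$, provided $\beta$ meets the technical ``generic'' hypotheses that feed that theorem. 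In particular, for all large $m$ we get $d(Y(K_m), H_m) > 3$.

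The next step is to transfer this distance statement upstairs to the knot. Here I would invoke Proposition~\ref{introprop:knot_distance_bounded_below}, which gives $d(Y(K_m), H_m) \le d(K_m, S_m)$. Combining this with the linear lower bound on the downstairs distance yields $d(K_m, S_m) > 3$ for all sufficiently large $m$, and indeed $d(K_m, S_m)$ grows linearly. By the Bachman--Schleimer criterion \cite{Bachman2003DistanceAB}, bridge distance greater than $3$ forces $K_m$ to be hyperbolic; the same source (or the high-distance hypothesis) also rules out the knot being composite or a torus knot, giving primality. Thus for all $m \gg 0$ the $K_m$ form an infinite family of prime hyperbolic $n$-bridge knots.

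To upgrade ``infinitely many knots'' to ``infinitely many \emph{distinct} knots with strictly increasing volume,'' I would pass to hyperbolic volume and genus. The abstract already signals that volume and genus are bounded below by a linear function of $m$; concretely, high bridge distance gives a linear lower bound on $\mathrm{vol}(S^3 \setminus K_m)$ (via the relationship between distance, the incompressibility of the bridge surface, and known volume estimates), so $\mathrm{vol}(S^3 \setminus K_m) \to \infty$. Since hyperbolic volume is a complete invariant up to finitely many coincidences (Mostow rigidity ensures volume is a topological invariant of the knot complement, and by Thurston's finiteness only finitely many hyperbolic $3$-manifolds share any given volume), a strictly increasing volume sequence can contain each value only finitely often. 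I would therefore extract a subsequence of powers $m_1 < m_2 < \cdots$ along which the volumes are strictly increasing, guaranteeing the corresponding $K_{m_j}$ are pairwise non-homeomorphic, hence distinct knots.

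The main obstacle I anticipate is verifying that the fixed braid $\beta$ genuinely satisfies the technical conditions required to apply the linear-growth theorem of \cite{DistancesofHeegaardsplittings} after lifting to the double branched cover: one must check that the mapping class $\varphi$ induced on the genus-$(n-1)$ surface is pseudo-Anosov with the correct behavior of its stable and unstable foliations relative to the disk sets $H_+, H_-$ of the Heegaard splitting, and that the $1$-component plat closure condition on $\beta$ is compatible with this lift. The delicate point is that pseudo-Anosov on the punctured disk (the braid picture) must be shown to descend to a pseudo-Anosov, sufficiently ``generic'' map on the double cover whose distance grows; this genericity is precisely why the theorem statement hedges with ``certain powers'' and ``generic,'' and making that translation rigorous is where the real work lies. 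The remaining steps—applying Proposition~\ref{introprop:knot_distance_bounded_below}, the Bachman--Schleimer hyperbolicity criterion, and Mostow rigidity—are essentially citations once the distance growth is established.
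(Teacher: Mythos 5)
Your overall skeleton matches the paper's: lift to the double branched cover, use the linear growth of Hempel distance for powers of a generic pseudo-Anosov gluing map, pull that bound upstairs via Proposition~\ref{introprop:knot_distance_bounded_below}, and invoke the Bachman--Schleimer results to get hyperbolicity (distance at least $3$) and primality (composite knots have distance $1$ in every bridge position, Theorem~\ref{thm:composite_distance_one}). The genuine gap is that you run the argument for the plat closures of \emph{all} powers $\beta^m$. Even when $\widehat{\beta}$ is a knot, $\widehat{\beta^m}$ is in general a multi-component link: the paper's own example is $\beta=\sigma_1\sigma_2\sigma_3\in B_4$, whose plat closure is an unknot while $\widehat{\beta^4}$ is a $4$-component unlink. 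The paper resolves this by letting $k$ be the order of $\pi(\beta)$ under the canonical projection $\pi\colon B_{2n}\to S_{2n}$ and restricting to the powers $\beta^{mk+1}$, which are precisely the ``certain powers'' of the statement. You flag the component count as an anticipated difficulty in your final paragraph but never actually handle it; without this selection step the objects you produce need not be knots at all, so the theorem as stated is not proved.

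Your distinctness argument also takes a different route. You use volume: a claimed linear lower bound on $\mathrm{vol}(S^3\setminus K_m)$ together with Thurston's finiteness of hyperbolic $3$-manifolds of a given volume to extract a strictly increasing subsequence. The paper instead bounds the Seifert genus below by $\tfrac{1}{2}(d-1)$ via Corollary~\ref{cor:knot_genus_Hempel_distance} and above by Seifert's algorithm on each fixed diagram, so that any single knot type can occur only finitely often in the sequence. Your route is reasonable in spirit, but the step ``high bridge distance gives a linear lower bound on volume'' is asserted rather than derived (the paper obtains it only indirectly, through the genus bound and the cited genus--volume relationship), whereas the genus comparison needs nothing beyond the corollary already quoted. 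If you repair the component-count issue and either justify the volume lower bound or substitute the genus argument, the remaining steps are, as you say, essentially citations.
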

%% \infhypprimeknots*
%\begin{theorem}\label{introthm:infhypprimeknots}
%    For $n\geq 3,$ if $\beta\in B_{2n}$ is a generic, pseudo-Anosov braid with $1$-component plat closure, then plat closures of certain powers of $\beta$ will generate a sequence of infinitely many distinct prime hyperbolic knots whose volumes are strictly increasing.  
%\end{theorem}

In particular, if $\beta$ is chosen to have only positive crossings, the crossing numbers of this family of hyperbolic knots also increases.  This provides a method for generating high crossing number hyperbolic knots.  
Notably, the Hempel distance of $k$ in such a family is bounded \emph{below} by a linear function in $m$, but we do not have an upper bound for the Hempel distance. In contrast, in \cite{bridgedistanceplat} Moriah and Johnson calculated precisely the Hempel distance for plat closures of braid words of a certain form, called \emph{highly-twisted} knots.  Our construction does not require that the plat projections are highly-twisted, as in Examples~\ref{example:3-bridgeconcreteexample} and~\ref{example:3_-1} (though we do not know if these admit highly-twisted diagrams in the sense of \cite{HighlyTwisted}).  That is, our construction is slightly more general, but loses the sharpness of 
\cite{bridgedistanceplat}.  However, there is clear synergy; in Example~\ref{example:highlytwisted}, we construct a pseudo-Anosov braid whose plat closure is a highly-twisted knot and use the results of Moriah and Johnson to explicitly calculate the Hempel distances of the plat closures of powers of this braid word.

%We conjecture that such an upper bound can be found by finding an upper bound on the genus of the knots $\widehat{\beta^m}$ constructed in Theorem~\ref{introthm:infhypprimeknots}

Also of interest, our family of knots have unbounded volume. It has been shown by Bachman and Schleimer in \cite{Bachman2003DistanceAB} that the genus of a knot is bounded below by a linear function in the Hempel distance of the knot (see Corollary \ref{cor:knot_genus_Hempel_distance}).  The genus of a hyperbolic knot is known to be linearly related to the volume of the knot \cite{brittenham1998boundingcanonicalgenusbounds}.  Thus, the volume of a hyperbolic knot is bounded below by a  linear function in the Hempel distance.  A key feature of our sequence of plat closures is that the Hempel distance increases with each power, so that the volume of the hyperbolic knots increases as well.

\vspace{1em}
The paper is outlined as follows:
in Section 2 we introduce links in plat position and discuss the Nielsen-Thurston classification of braids.  In particular, we consider pseudo-Anosov braids.  Then we discuss the Hempel distance for both Heegaard splittings and links in bridge position and recall some facts about how the distance of a link in bridge position relates to its link type.  In Section 3 we recall the method for lifting a link in plat position to obtain a 3-manifold and a Heegaard splitting that corresponds to the link.  We show how to ``see'' the gluing map of the lift by considering a \emph{shadow diagram} of the plat.  We then prove our main proposition, the Hempel distance of a link is bounded below by the Hempel distance of the splitting map of the lift.  In Section 4, prove  our main theorem.  We show an explicit sequence that generates infinitely many hyperbolic knots whose genus and hyperbolic volume are bounded below by linear functions.  Our final section provides some remaining questions and a few ideas for further research.

%%%%%%%%%%%%%%%%%%%%%%%%%%%%%%%%%%%%%%%%%%%%%%%%%%%%%%
\subsection*{Acknowledgements}\label{subsec:acknowledgements}

We thank William Menasco and \c Ca\u gatay Kutluhan for their constant guidance and suggestions on this research, as well as Deepisha Solanki, Greg Vinal, and Rom\'an Aranda for taking an active interest in this research and for many insightful conversations.  We would also like to thank Dan Margalit and Saul Schleimer for their suggestions on early drafts of this manuscript.  We thank for Ryan Blair, Nathan Dunfield, Nir Lazarovich, Eric Samperton, and Moirah Yoav for their helpful correspondence and insights into the first draft of this paper.  Carolyn Engelhardt was supported in part by a Simons Foundation grant No. 519352.

%%%%%%%%%%%%%%%%%%%%%%%%%%%%%%%%%%%%%%%

%%%%%%%%%%%%%%%%%%%%%%%%%%%%%%%%%%%%%%%
\section{Preliminaries} %\label{sec:preliminaries}

Throughout this paper we restrict our attention to links in $S^3$ and use $\overline{X}$ to denote a manifold $X$ with reversed orientation.  In this section, we review the definitions and notation for plats and Hempel distance.  We also review known results about the Hempel distance of knots which we will use in the construction in Section~\ref{sec:Generating_a_Sequence_of_Hyperbolic_Knots}.

%%%%%%%%%%%%%%%%%%%%%%%%%%%%%%%%%%%%%%%
\subsection{Plat Closures and Pseudo-Anosov Braids}\label{subsec:PlatClosuresandPseudo-AnosovBraids}

%%%%%%%%%%%%%%%%%%%%%%%%%%%%%%%%%%%%%%%%%%%%%%%%%%%%%%
\subsubsection{$B_{2n}, B(S^2)_{2n},$ and Plats}\label{subsubsec:plat_bridge_braid}

Let $B_{2n}$ denote the braid group on $2n$-strands and $B(S^2)_{2n}$ denote the spherical braid group on $2n$ strands.  The \emph{plat closure} of a braid word $\beta$ in $B_{2n}$ (respectively, $B(S^2)_{2n})$, denoted as $\widehat{\beta}$, is the link in $\R^3$ (respectively, $S^3$) obtained by joining the $(2i-1)^{th}$ top strand to the $2i^{th}$ top strand and the $(2i-1)^{th}$ bottom strand to the $2i^{th}$ bottom strand with untwisted arcs for $1\leq i\leq n$.  We call $\widehat{\beta}$ a \emph{plat}.  See Figure~\ref{fig:platclosureofbraidword} for an illustration.  We note that plat closures are only defined for braid groups with an even number of strands.  
%%%%%
\begin{figure}[ht]
    \centering
    \includegraphics[width=0.5\linewidth]{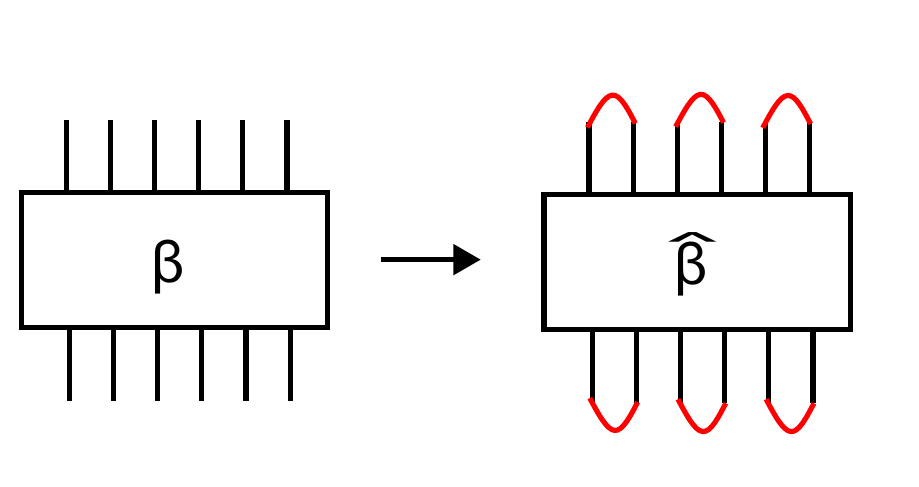}
    \caption{The \emph{plat closure} of $\beta.$}
    \label{fig:platclosureofbraidword}
\end{figure}
%%%%%

By \cite{Schubert1954berEN}, every link $L$ in $S^3$ admits an \emph{$n$--bridge decomposition} with respect to some splitting sphere $S^2$ for some $n \geq 1$ (see Section~\ref{subsec:Hempel_Distance_of_Links}).  An $n$--bridge splitting of $(S^3, L)$ corresponds to a Morse function $f \colon S^3 \ra \R$ such that $f|_L$ has $n$ maxima, all of which are above $S^2$, and $n$ minima, all of which are below $S^2$.  By perturbing this Morse function to be self-indexing, we see that every link $L$ in $S^3$ is the plat closure of some braid word $\beta \in B(S^2)_{2n}$ for some $n$.  By treating $S^3$ as $\R^3 \cup \{\infty\}$, where $\infty$ is a point on $S^2$ away from $L$, we can treat $L$ as the plat closure of $\beta \in B_{2n}$.  
The representation of a link as a plat is far from unique.  In \cite{Birman_stable_eq}, Birman proved that if $\widehat{\beta}, \widehat{\beta^\prime}$ are the same link, then $\beta \in B(S^2)_{2n}, \beta^\prime \in B(S^2)_{2m}$ are related by a sequence of \emph{stabilizations, destabilizations}, and left and right multiplication by words in $H_{2n}$, Hilden's subgroup \cite{Hilden_two_groups} (and similarly for $\beta, \beta^\prime \in B_{2n})$.  An example of stabilization is illustrated in Figure~\ref{fig:bridgestabilization}.  

There are distinctions between plats in $B_{2n}$ and in $B(S^2)_{2n}$.  For example, there is one Hilden double coset of the unknot in $B(S^2)_{2n}$ \cite{solanki_plats_unlink}, but this is unknown for $B_{2n}$.  However, for our purposes, we can pivot between considering $\beta \in B_{2n}$ or $B(S^2)_{2n}$ as needed, as discussed in Section~\ref{subsubsec:Pseudo-Anosov_Braids}.  We will generally discuss $\beta \in B_{2n}$.  

\subsubsection{Number of Components in a Plat Closure}\label{subsubsec:number_of_components}

For $\beta\in B_{2n}$, $\widehat{\beta}$ will be a link of up to $n$ components. We seek to study \emph{knots}, as opposed to links with multiple components.  In order to do so, we must first describe a method of determining how many components a plat $\widehat{\beta}$ has.  Consider $B_{2n}$ as the mapping class group of the $2n$-punctured disk. The map $\pi \colon B_{2n} \ra S_{2n}$, called the \emph{canonical projection} from the braid group to the symmetric group, is a surjective homomorphism which takes $\beta$ to the permutation which sends the punctures $(1, 2, ..., 2n)$ to their images under $\beta$.   See Figure~\ref{fig:braidpermutation}. 

%%%%%
\begin{figure}[ht]
    \centering
    \includegraphics[scale=0.4]{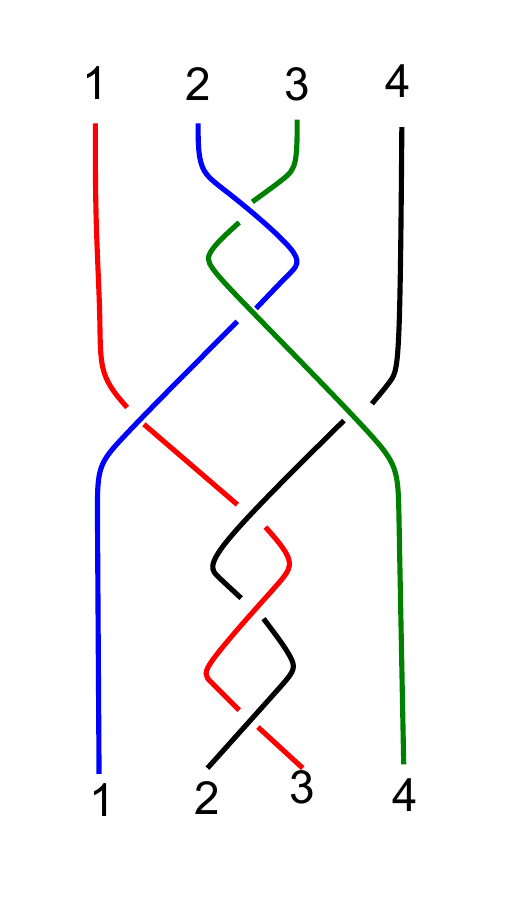}
    \caption{The braid $\sigma_2^2\sigma_1^{-1}\sigma_3\sigma_2^{-3}$.  We see that $\pi(\sigma_2^2\sigma_1^{-1}\sigma_3\sigma_2^{-3}) = [3,1,4,2]$.}
    \label{fig:braidpermutation}
\end{figure}
%%%%%

Since the plat closure of $\beta$ is obtained by joining the $(2i-1)^{th}$ top strand to the $2i^{th}$ top strand and the $(2i-1)^{th}$ bottom strand to the $2i^{th}$ bottom strand for $1\leq i\leq n,$ the permutation $\pi(\beta)$ allows us to determine the number of components of $\widehat{\beta}$ by constructing a graph, called the \emph{plat closure graph} of $\beta$, as follows: there are $2n$ vertices, labelled $1, ..., 2n$.  For $1 \leq i \leq n$, there is an edge between vertices $2i-1$ and $2i$ (these edges correspond to the top bridges of the plat closure).  Write the permutation $\pi(\beta)$ as an $n$--tuple with $i^{th}$ entry the image of the $i^{th}$ puncture under $\beta$.  Then, for $1 \leq i \leq n$, add an edge between the vertices in the $(2i-1)^{th}$ and $2i^{th}$ entries of $\pi(\beta)$. 

%%%%%
\begin{proposition}\label{prop:plat_closure_graph_components}
    Let $\beta\in B_{2n}$.  Then the number of components of $\widehat{\beta}$ is exactly the number of connected components of the plat closure graph of $\beta$. 
\end{proposition}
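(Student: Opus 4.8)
The plan is to realize $\widehat{\beta}$ as a union of elementary arcs and identify the plat closure graph, up to relabeling, with the quotient obtained by collapsing the braid strands to points; the count then follows because collapsing arcs preserves the set of connected components.

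First I would fix notation for the pieces of $\widehat{\beta}$. Label the top endpoints $t_1,\dots,t_{2n}$ and bottom endpoints $b_1,\dots,b_{2n}$, and write $\sigma=\pi(\beta)$, so that $\beta$ contributes $2n$ disjoint \emph{strands}, the $j$th running from $t_j$ to $b_{\sigma(j)}$. The plat closure then adds $n$ \emph{top bridges} joining $t_{2i-1}$ to $t_{2i}$ and $n$ \emph{bottom bridges} joining $b_{2i-1}$ to $b_{2i}$ for $1\le i\le n$. The structural observation that makes the count combinatorial is that every endpoint $t_j,b_j$ meets exactly one strand and exactly one bridge, so $\widehat{\beta}$ is a closed $1$-manifold, i.e.\ a disjoint union of circles, and each circle is a cyclic alternating concatenation of strands and bridges.

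Next I would collapse each strand to a point, producing a graph $Q$ whose $2n$ vertices are the strands and whose $2n$ edges are the bridges, each edge joining the two strands whose endpoints that bridge connects. Labeling each strand by its bottom endpoint position $k$ (the strand terminating at $b_k$), the bottom bridges become the edges $\{2i-1,2i\}$ while the top bridges become the edges $\{\sigma(2i-1),\sigma(2i)\}$, so $Q$ is exactly the plat closure graph of the statement, up to the harmless interchange of the words ``top'' and ``bottom,'' which does not affect connectivity. I would flag here that the apparent ambiguity between $\sigma$ and $\sigma^{-1}$—arising from whether strands are indexed by their top or bottom endpoints—is immaterial: relabeling vertices by $\sigma$ carries one version of the graph to the other, so the two have the same number of connected components.

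Finally, since each strand is a contractible arc, the quotient map $\widehat{\beta}\to Q$ is a homotopy equivalence and in particular induces a bijection on path components; concretely, a circle of $\widehat{\beta}$ built from $k$ strands and $k$ bridges collapses to a single cycle subgraph of $Q$, which is connected, and distinct circles collapse to vertex-disjoint subgraphs that together partition $Q$. Hence the components of $\widehat{\beta}$ correspond bijectively to the connected components of $Q$, which is the desired conclusion. I expect the only real care to be needed in the middle step—matching the combinatorially defined edges of $Q$ to the permutation description in the statement and pinning down the indexing conventions—rather than in the topology, which is routine.
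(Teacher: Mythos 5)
Your proof is correct and follows essentially the same route as the paper's: the paper's one-line argument that the bridges attached to the strands of $\beta$ give a ``handle decomposition'' of $\widehat{\beta}$ detecting its components is exactly your strand-and-bridge decomposition, which you then make explicit by collapsing strands to obtain the plat closure graph. Your version is simply a more careful expansion (including the useful observation that the top/bottom and $\sigma$ versus $\sigma^{-1}$ indexing ambiguities are immaterial for connectivity), and no gap is introduced.
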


\begin{proof}
    By construction, the edges between $2i-1$ and $2i$ are the top bridges of $\widehat{\beta}$ and the edges between the vertices in the $(2i-1)^{th}$ and $2i^{th}$ entries of $\pi(\beta)$ are the bottom bridges of $\widehat{\beta}$.  The attachment of these bridges to the strands of $\beta$ determines a handle decomposition of $\widehat{\beta}$, which is sufficient to detect the number of components of $\widehat{\beta}$. 
\end{proof}
%%%%%

We note that if the plat closure of a braid $\beta$ is a knot, that does \emph{not} mean that all powers of $\beta$ have $1$-component plat closures. For example, the plat closure of the braid $\beta=\sigma_1\sigma_2\sigma_3\in B_{4}$ is an unknot.  However, $\beta^4=\Delta^2$, where $\Delta$ is the Garside element, so $\widehat{\beta^4}$ is a 4-component unlink. 

\vspace{1em}

%%%%%
\begin{example}\label{example:plat_graph}
    Suppose we have a relative complicated braid word 
    $$\beta=\sigma_4^{-1}\sigma_2\sigma_3^{-1}\sigma_2^{-1}\sigma_3\sigma_4^{-2}\sigma_3^{-1}\sigma_2\sigma_3^{-1}\sigma_2^{-1}\sigma_5^{-1}\sigma_4$$ %Finding the associated permutation is still fast (in fact, it's linear in the length of the word).  
    We see that $\pi(\beta)=[1,6,5,2,3,4].$  Our plat closure graph corresponding to $\beta$ is as in Figure~\ref{fig:platclosuregraph}.  From this graph, we see that $\widehat{\beta}$ has two components, one of which is $1$-bridge and the other of which is $2$-bridge.
    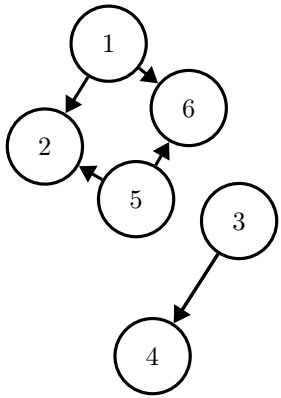
\begin{figure}[ht]
        \centering
        \begin{tikzpicture}[scale=0.7]
        \node[mynode](n5) at (0,0){5};
        \node[mynode](n6) at (60:2){6};
        \node[mynode](n3) at (168:-2){3};
        \node[mynode](n2) at (150:2){2};
        \node[mynode](n4) at (276:3){4};
        \node[mynode](n1) at (100:3){1};
        \draw[myarrow](n1)--(n2);
        \draw[myarrow](n3)--(n4);
        \draw[myarrow](n5)--(n6);
        \draw[myarrow](n1)--(n6);
        \draw[myarrow](n5)--(n2);
        \draw[myarrow](n3)--(n4);
        \end{tikzpicture}
        \caption{Plat Closure Graph of $\beta$}
        \label{fig:platclosuregraph}
    \end{figure}
\end{example}
%%%%%

%%%%%%%%%%%%%%%%%%%%%%%%%%%%%%%%%%%%%%%%%
\subsubsection{Pseudo-Anosov Braids}\label{subsubsec:Pseudo-Anosov_Braids}

The spherical braid group $B(S^2)_{n}$ is isomorphic to the mapping class group of $S^2$ with $n$ punctures, and the braid group $B_{n}$ is isomorphic to the mapping class group of a disk $D^2$ with $n$ punctures \cite{Birman_MCG_Bn}.  By the Nielsen-Thurston classification of elements of the mapping class group of a surface \cite{NielsenThurston}, we have that each braid word $\beta \in B(S^2)_{n}$ or $\beta\in B_{n}$ may be classified as either: 
\begin{enumerate}
    \item \emph{Periodic:} there exist positive integers $s,t$ with $\beta^s=\Delta^t$ where $\Delta$ is the Garside element of $B_{n}.$
    \item \emph{Reducible:}  there exists a collection of disjoint curves on the punctured sphere fixed by $\beta.$
    \item \emph{Pseudo-Anosov:} powers of $\beta$ stabilize (leave invariant) a pair of foliations on the punctured sphere, called stable and unstable.  A transverse measure on each of these foliations is expanded or contracted by the dilation, $\lambda$. 
\end{enumerate}
\noindent See \cite{BraidsandDynamics} for an introduction to the subject. 

The Artin presentation of $B(S^2)_{n}$ has the same generators as the Artin presentation of $B_{n}$ and the same relations, with the addition of the relation
    $\sigma_1\sigma_2\cdots\sigma_{n-1}^2\sigma_{n-2}\cdots\sigma_1=1$.  
\noindent If $\iota \colon B_{2n} \ra B(S^2)_{n}$ is the natural map sending $\sigma_i$ to $\sigma_i$, then $\iota(\beta)$ has the same Nielsen-Thurston classification as $\beta$.  Because our primary concern about braids will be their Nielsen-Thurston classification, we can consider $\beta \in B_{2n}$, as opposed to working explicitly in $B(S^2)_{2n}$.

In this work, we are primarily interested in the plat closures of pseudo-Anosov braids.  By Theorem 5.1 of \cite{pAbraidsareGeneric}, pseudo-Anosov braids are generic in $B_{n}$ in the following sense: if we consider all braid words in $B_n$ of length at most $\ell$, the proportion of pseudo-Anosov braids in this collection increases exponentially as $\ell$ increases.  %That is, if one were to pick a braid $\beta\in B_{2n}$ arbitrarily, it would be a pseudo-Anosov braid with probability 1.  

Pseudo-Anosov braids have many interesting properties; in particular, they leave invariant a foliation on the punctured disk. The unstable foliation in turn gives rise to an invariant \emph{train track}, which is an immersed 1-manifold. See Section~\ref{subsubsec:Train-Tracks_Method_for_Standard_Pseudo-Anosov_Braids} for further discussion and Example~\ref{example:3-bridgeconcreteexample} for an example of a train track on a disk. % and see \cite{BraidsandDynamics} for further discussion. 
Also of note: pseudo-Anosov braids admit a numerical measure called \emph{entropy} which is the logarithm of the dilation $\lambda$.  Heuristically, one can think of braids as ways to ``mix up'' the disk, and pseudo-Anosov braids as ways to ``mix up'' the disk as much as possible.  The entropy of a pseudo-Anosov braid can then be thought of as measuring how quickly the disk is mixing.

%%%%%%%%%%%%%%%%%%%%%%%%%%%%%%%%%%%%

%\subsection{Hempel Distance of Heegaard Splittings and Links}\label{sec:Hempel_Distance_of_Heegaard_Splittings_and_Links}

%%%%%%%%%%%%%%%%%%%%%%%%%%%%%%%%%%%%
\subsection{Hempel Distance of Heegaard Splittings}\label{subsec:Hempel_Distance_of_Heegaard_Splittings}
A genus $g$ \emph{Heegaard splitting} of a $3$-manifold $Y$ is a decomposition $Y=H_\alpha \cup_\phi \overline{H_\beta}$, where $H_\alpha$ is a handlebody which is defined by a set of $g$ pairwise disjoint nonseparating simple closed curves called $\alpha = \{\alpha_1, ..., \alpha_g\}$ (and similarly, $H_\beta$ is defined by $\beta = \{\beta_1, ..., \beta_g\}$).  The map $\phi \colon F_g \ra F_g$ %such that $\phi(\alpha)=\beta$ 
is called the \emph{splitting map}; this is the map which glues $H_\alpha$ to $H_\beta$ along $F_g = \bdry H_\alpha = \bdry H_\beta$.

Given a closed genus $g$ surface $F$, the \emph{curve graph} of $F$, denoted $\mathcal{C}(F)$, is the graph whose vertices are proper isotopy classes of essential simple closed curves on $F$.  There is an edge between two vertices if and only if there are representatives of the corresponding isotopy classes which are disjoint.  We encourage an interested reader to see \cite{MM1999} for more details.  We note that the curve graph is a special case of the arc and curve graph of a surface, discussed below in Section~\ref{subsec:Hempel_Distance_of_Links}, since a closed surface does not contain any properly embedded arcs.  

In \cite{HEMPEL2001} Hempel introduced a measure of complexity for Heegaard splittings of $3$-manifolds now called the \emph{Hempel distance} of the splitting, which is the minimal distance in $\mathcal{C}(F)$ between a curve $A$ which bounds a disk in $H_\alpha$ and a curve $B$ which bounds a disk in $H_\beta$.  
%
\begin{comment}
The name is derived from considering the splitting map as an element in the mapping class group of the splitting surface and its action on the $g$ pairwise disjoint nonseparating simple closed curves $\alpha=\{\alpha_1,\dots,\alpha_g\}$.  More specifically, the \emph{Hempel distance} of a genus $g$ Heegaard splitting $(F, \alpha, \beta)$ is the distance in the curve graph of $F$ between the simplex spanned by the $\alpha$ curves and the simplex spanned by the $\beta$ curves, where $\alpha$ maps to $\beta$ under the splitting map.

\end{comment}
\begin{comment}
Given the set $\alpha$, a \emph{handle slide} of $\alpha_i$ over $\alpha_j$ is a curve $\alpha^\prime$ which is the band sum of $\alpha_i, \alpha_j$ by a band embedded in $F \backslash \alpha$. The set $\{\alpha_1, .., \alpha_{i-1}, \alpha^\prime, \alpha_{i+1}, ..., \alpha_g\}$ also defines the handlebody $H_\alpha$.  Framed in this way, the Hempel distance of a Heegaard splitting $(F, \alpha, \beta)$ is the minimal distance in $\mathcal{C}(F)$ between the set of curves which can be obtained from the set $\alpha$ by handle slides and the set of curves which can be obtained from the set $\beta$ by handle slides.  
\end{comment}
We denote the Hempel distance of a Heegaard splitting $H$ of a $3$-manifold $Y$ as $d(Y,H)$. % Notice that each of the curves in the set $\alpha$ bound a disk inside the handlebody $H_\alpha.$  Therefore, we are considering the distances between collections of curves that bound disks inside the handlebody.  This will be a motivating feature for defining the Hempel distance of a link in plat position. 

Also in \cite{HEMPEL2001}, Hempel showed that by taking powers of certain pseudo-Anosov elements in the mapping class group of a splitting surface $F$, one can construct Heegaard splittings of $3$-manifolds with arbitrarily high distance by using these elements as the splitting map.  As for the effect on Hempel distance of the reducible and periodic mapping class group elements, not much work can be done.  If a reducible element in the mapping class group fixes any curve in the splitting surface which bounds a disk in either handlebody, then the Hempel distance of the splitting is 0.  A similar argument bounds the Hempel distance of Heegaard splittings with periodic mapping class group elements as their splitting maps.

\begin{comment} 
Motivated by this idea, we will find powers of certain pseudo-Anosov braids that give links with arbitrarily high distances. To do so, we will first give a lower bound on the Hempel distance of a knot in plat position $(K, S)$ by lifting the knot in plat position to obtain a Heegaard splitting of the branched double cover over $S^3$ branched along $K$.  If $K= \hat{\beta}$ with $\beta$ a generic pseudo-Anosov braid, the corresponding gluing map of splitting surface $F$ in the double branched cover will be a generic pseudo-Anosov splitting map.  We will show that the distance of $\beta^n$ is bounded below by the Hempel distance of the same $n^{th}$ power of the splitting map which grows linearly in $n.$ See Theorem 1.1 of  \cite{DistancesofHeegaardsplittings}. 
\end{comment}

%%%%%%%%%%%%%%%%%%%%%%%%%%%%%%%%%%%%
\subsection{Hempel Distance of Links}\label{subsec:Hempel_Distance_of_Links}

 Throughout this paper, we will view $S^3$ as the standard genus 0 Heegaard splitting $S^3=B^3\cup_{S^2} \overline{B^3}$.  A link $L$ is in $n$--\emph{bridge position} if $L$ is positioned such that it intersects the 2-sphere $S^2$ transversely in $2n$ points, and in each $B^3$ handlebody, $L$ is a collection of $n$ properly embedded, pairwise disjoint, simultaneously boundary parallel arcs.  We call such a collection a \emph{trivial tangle} (or often, just a \emph{tangle}).   
%intersects the 2-sphere $S^2$ so that in each $B^3$ handlebody, $L$ is a collection of trivial tangles. 
There are many different choices for the $S^2$ splitting surface of the genus $0$ Heegaard splitting of $S^3$ and $L$ may be in bridge position with a different choice of $S^2.$  Thus, when we consider a link in bridge position we will always mean with respect to a fixed $S^2$ splitting surface, which we denote as $(L, S^2)$.  This is also called a \emph{bridge decomposition} of $(S^3, L)$. It is clear that the plat closure of a braid is in bridge position.  Thus, all the statements below about the Hempel distance of a link in bridge position holds for the plat closures of braids.  In a forthcoming paper, the second author proves that there is a bijective correspondence between the plat closures of braids $\beta\in B_{2n}$ and links in bridge position up to bridge isotopy.  That is, $L$ is in $n$-bridge position if and only if $L$ is isotopic to the plat closure of some $\beta \in B_{2n}$ via an isotopy which preserves bridge position.

We will treat a tangle $T_\beta \subset B^3$ as the \emph{upper-plat closure} of a braid word.  We use $T_\e$ to denote the upper plat closure of the empty word $\e$.  We use $\overline{T_\beta}$ to denote $T_\beta$ with reversed orientation; that is, $\overline{T_\beta} \subset \overline{B^3}$, the lower plat closure of $\beta^{-1}$.   
%%%%%
\begin{figure}[ht]
    \centering
    \includegraphics[scale=0.6]{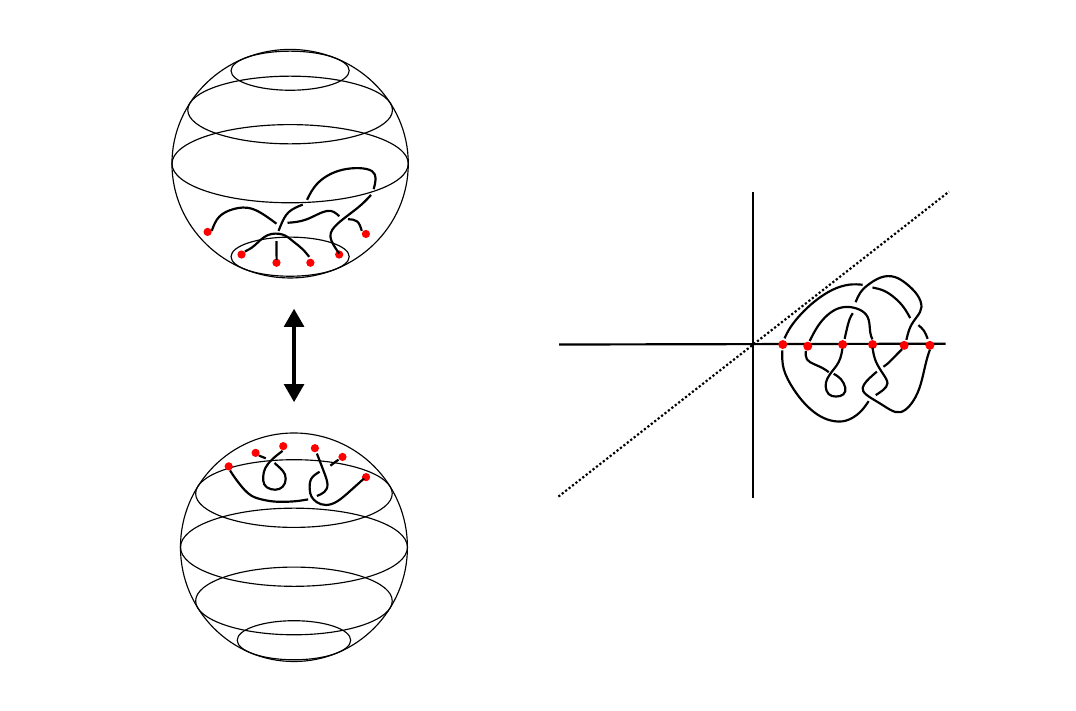}
    \caption{Two ways to view a knot in bridge position. The image on the left shows the Heegaard splitting of $S^3$. In the image on the right, the splitting sphere is thought of as the $xy$-plane compactified with the point at $\infty$.}
    \label{fig:twoviewsofbridgepositin}
\end{figure}
%%%%%

Given a link $L$ in $n$--bridge position, it punctures the splitting sphere $S^2$ in $2n$ points. We will denote this $2n$-punctured sphere by $X_{2n}.$  With some abuse of notation, we will occasionally also call $X_{2n}$ the \emph{splitting sphere} for the genus-$0$ Heegaard splitting of $S^3$. Corresponding to $X_{2n}$, we construct a graph called the $\emph{Arc and Curve Graph}$, denoted $\mathcal{AC}(X_{2n}).$  The vertices of $\mathcal{AC}(X_{2n})$ are proper isotopy classes of essential arcs and curves on $X_{2n}.$  There is an edge between two vertices if and only if there are representatives of the corresponding isotopy classes which are disjoint.  We encourage an interested reader to see \cite{MM1999} for more details.

With some abuse of notation, let $S^3 \backslash L$ denote $S^3 \backslash \text{int}({N(L)})$, the \emph{link exterior}.  Notice that $X_{2n}$ divides $S^3\backslash L$ into two 3-balls with $n$-arcs removed.  We call these submanifolds $B_{+}$ and $B_{-}$.  Often it is advantageous to consider $X_{2n}$ as a subspace of either $B_{+}$ or $B_{-}.$  When we consider $X_{2n}$ as a subspace of a $3$-ball $B^3$ with an $n$--strand trivial tangle $T$ removed (such as $B_{+}$ or $B_-)$, we often discuss the \emph{disk complex} $\mathcal{D}(X_{2n}, B^3 \backslash T)$, which is a subcomplex of $\mathcal{AC}(X_{2n})$ whose vertices are curves which bound disks in $B^3 \backslash T$.  The \emph{Hempel distance of L with respect to $S^2$}, denoted $d(L,S^2)$, is the length of the shortest path in $\mathcal{AC}(X_{2n})$ between $\mathcal{D}(X_{2n}, B_{-})$ and $\mathcal{D}(X_{2n}, B_+)$.  We encourage an interested reader to see \cite{Bachman2003DistanceAB}.

%%%%%
\begin{figure}[ht]
    \centering
    \includegraphics[scale=0.7]{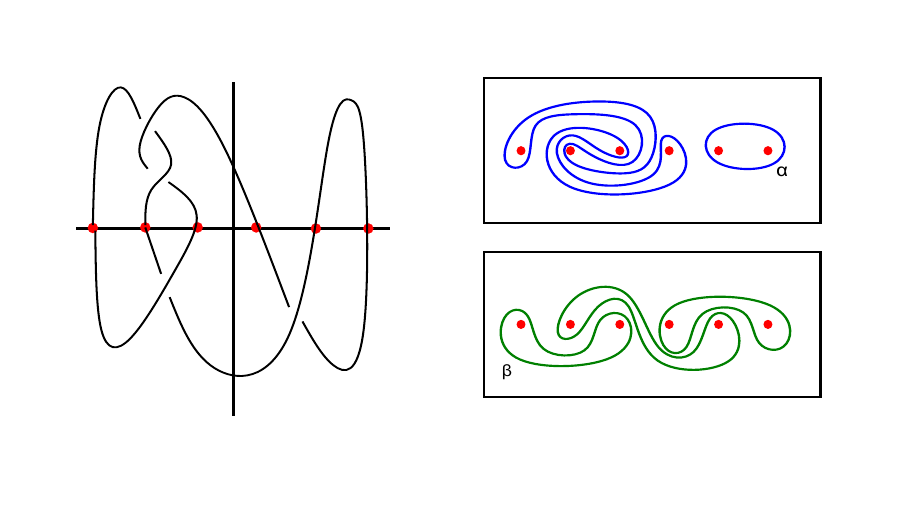}
    \caption{The left figure is a link in bridge position.  The right two figures show the splitting sphere $X_{2n}.$  The top right shows three curves in $\mathcal{D}(X_{2n}, B_+)$ and the bottom right shows three curves in $\mathcal{D}(X_{2n}, B_-).$ Notice that since the curve labeled $\alpha$ is disjoint from the curve labeled $\beta,$ the Hempel distance of this link is 1.}
    \label{fig:hempeldistanceandVs}
\end{figure}
%%%%%

Given a link $L$ in bridge position with respect to a bridge sphere $S$, take a point of $L \cap S$ and perturb a neighborhood of the point slightly to add two new critical points to $L$ (a maximum and a minimum).  The resulting link $L^\prime$ is now in $(n+1)$-bridge position because $S$ is still a bridge sphere for $L^\prime$ and $L^\prime$ now intersects $S$ in $2(n+1)$ points.  This move is called a \emph{stabilization} of $L$.  On the other hand, canceling a maximum and a minimum by an isotopy in a neighborhood of a point of $L$ intersecting the bridge sphere $S$ is called a \emph{destabilization}.  An example is illustrated in Figure~\ref{fig:bridgestabilization}.  If $L$ can be destabilized we say that it \emph{admits a destabilization.}

%%%%%
\begin{figure}[ht]
    \centering
    \includegraphics[scale=0.8]{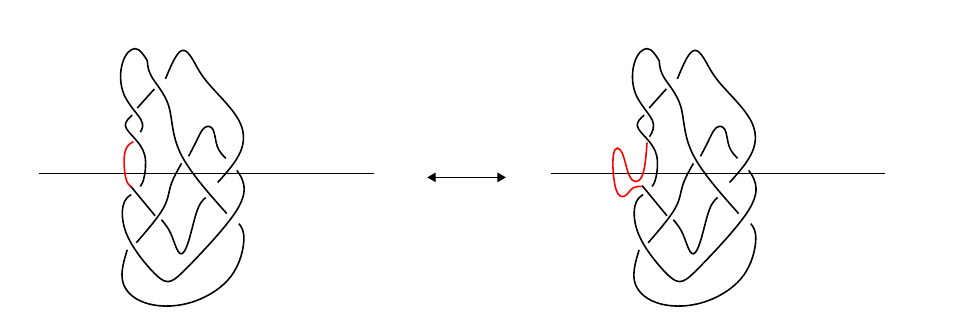}
    \caption{An arbitrary link in bridge position with a stabilization/destabilization occurring along the red subarc}
    \label{fig:bridgestabilization}
\end{figure}
%%%%%

%%%%%%%%%%%%%%%%%%%%%%%%%%%%%%%%%%%%
\subsubsection{Known Results about the Hempel Distance of Knots}\label{subsubsec:known_results_hempel_distance}

The Hempel distance of a knot with respect to a splitting sphere provides a lot of information about the knot.  For instance, the following results have been shown in \cite{Bachman2003DistanceAB} and \cite{ozawa_takao_2013}: 
%%%%%
\begin{theorem}\cite{Bachman2003DistanceAB}\cite{ozawa_takao_2013}\label{thm:distance_geq_one_no_destab}
   If a knot $K$ in bridge position has Hempel distance greater than 1, it does not admit a destabilization.
\end{theorem}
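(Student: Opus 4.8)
The plan is to prove the contrapositive: if $K$ admits a destabilization, then $d(K, S^2) \leq 1$. By the definition of destabilization recalled above, the cancellation of a maximum and a minimum is supported in a small ball $B$ meeting the bridge sphere $S^2$ in a disk, inside which $(B, K \cap B)$ is the standard cancelling max–min pair. First I would extract from this local model the combinatorial data of the destabilization: an upper bridge arc $a_+ \subset B_+$ and a lower bridge arc $a_- \subset B_-$ that share exactly one of the $2n$ punctures (the point of $K \cap S^2$ across which the cancellation is performed), together with bridge disks $D_+ \subset B_+$ for $a_+$ and $D_- \subset B_-$ for $a_-$. Writing $b_\pm = D_\pm \cap S^2$ for the shadow arcs on the punctured sphere $X_{2n}$, the cancelling configuration is exactly the condition that $b_+$ and $b_-$ meet in a single point.

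Next I would convert each bridge disk into a vertex of the disk complex on the appropriate side. Given a bridge disk $D$ with shadow $b = D \cap S^2$, the frontier of a regular neighborhood $N(D)$ in the corresponding ball is a compressing disk for $B^3 \setminus T$ whose boundary is the curve $\partial N(b) \subset X_{2n}$ encircling the two endpoints of $b$. For $n \geq 2$ this curve encircles exactly two punctures and its complement contains at least two, so it is essential and hence a genuine vertex of $\mathcal{AC}(X_{2n})$. Applying this to $D_+$ and $D_-$ produces curves $c_+ \in \mathcal{D}(X_{2n}, B_+)$ and $c_- \in \mathcal{D}(X_{2n}, B_-)$.

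The final and hardest step is to show $d_{\mathcal{AC}}(c_+, c_-) \leq 1$, i.e. that $c_+$ and $c_-$ admit disjoint (or isotopic) representatives, which immediately gives $d(K, S^2) \leq 1$. This is the main obstacle, and it is genuinely delicate: because $a_+$ and $a_-$ share the cancelling puncture, the shadows $b_+$ and $b_-$ share an endpoint, so the naive curves $\partial N(b_+)$ and $\partial N(b_-)$ encircle incompatible puncture pairs and are forced to intersect. The resolution must use more than the bare existence of the cancelling pair: I would exploit the product structure of the destabilizing ball $B$ to slide the two compressing disks off the cancelling region, choosing the compressions so that their boundary curves are supported in disjoint subsurfaces of $X_{2n} \setminus (\text{the cancelling disk})$. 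Equivalently, one shows that the standardness of $(B, K \cap B)$ lets one realize a compressing curve for $B_+$ and one for $B_-$ that are simultaneously disjoint from the arc along which the cancellation occurs, so they can be isotoped apart. Establishing this disjointness is precisely the technical content carried out in \cite{Bachman2003DistanceAB} and \cite{ozawa_takao_2013}.

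I expect essentially all the difficulty to live in that last step: the first two steps are formal consequences of the definitions of bridge disk, destabilization, and the disk complex, whereas the sharp bound of $1$ (rather than the $2$ one obtains from the incompatible naive curves) requires genuinely understanding the local model of a stabilization and how it interacts with isotopies of compressing disks.
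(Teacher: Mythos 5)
First, note that the paper does not prove this statement at all: it is quoted verbatim from \cite{Bachman2003DistanceAB} and \cite{ozawa_takao_2013} as a known result, so there is no in-paper argument to compare against. Your setup is the right one and matches the standard approach in those references: pass to the contrapositive, extract the cancelling pair of bridge disks $D_\pm$ with shadows $b_\pm$ sharing the puncture across which the cancellation occurs, and pass to the curves $\partial N(b_\pm)$ in the disk sets. You also correctly diagnose the obstruction — those two naive curves encircle puncture pairs sharing a point and so cannot be made disjoint.

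The gap is that you never actually close that last step, and the fix you gesture at points in the wrong direction. Arranging both compressing curves to be ``disjoint from the arc along which the cancellation occurs'' does not make them disjoint from each other, and no amount of sliding $\partial N(b_+)$ and $\partial N(b_-)$ will separate them, since their puncture pairs are linked at the shared point. The actual argument abandons one of the two cancelling disks: first observe that a perturbed position is bridge-isotopic to an elementary stabilization in a standard tiny ball $B$, so that $c_- = \partial N(b_-)$ lies inside the small disk $\delta = B \cap S^2$ containing only the three punctures $x_1,x_2,x_3$ of the local model; then, because the bridge number is at least $3$, there is an upper bridge arc with both endpoints outside $\delta$ whose bridge disk (chosen from a complete disjoint collection and isotoped off $B$) has shadow missing $\delta$ entirely. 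The curve around that shadow is an upper-disk vertex disjoint from $c_-$, giving $d(K,S^2)\leq 1$. So the disjoint pair comes from pairing the cancelling \emph{lower} disk with a compressing disk of an \emph{unrelated} upper bridge, not from modifying the cancelling pair itself. Relatedly, your parenthetical ``for $n\geq 2$'' hides a real issue: on the $4$-punctured sphere no two non-isotopic essential curves are disjoint, the spare upper bridge does not exist, and the statement as literally written fails for $2$-bridge positions (e.g.\ the stabilized unknot); the implicit hypothesis $n\geq 3$, which Bachman and Schleimer impose throughout, is doing genuine work here.
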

%%%%%
%\vspace{1em}

%They also have shown:
%%%%%
\begin{theorem}\cite{ozawa_takao_2013}\label{thm:composite_distance_one}
    Any bridge position of a composite knot has Hempel distance 1.
\end{theorem}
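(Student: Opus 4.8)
The plan is to prove the two inequalities $d(K,S^2)\ge 1$ and $d(K,S^2)\le 1$ separately, with essentially all of the work in the upper bound.

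For the lower bound I would argue that no bridge position of a knot has distance $0$. If $d(K,S^2)=0$ there is an essential curve $c$ on $X_{2n}$ bounding honest compressing disks $D_+\subset B_+$ and $D_-\subset B_-$, each disjoint from $K$. Gluing them along $c$ produces a $2$-sphere $S'=D_+\cup_c D_-$ disjoint from $K$ and meeting the bridge sphere exactly in $c$. Since $S^3\setminus K$ is irreducible, $S'$ bounds a ball and the connected set $K$ lies entirely in one complementary ball. But $c$ is essential on $X_{2n}$, so it separates the $2n$ punctures into two nonempty collections lying on opposite sides of $S'$; near the punctures on each side $K$ is forced into the corresponding complementary ball, contradicting connectedness of $K$. (Composite knots have bridge number at least $3$, so essential curves are present and this argument applies.) Hence $d(K,S^2)\ge 1$.

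For the upper bound I would exploit the connected-sum structure. Write $K=K_1\# K_2$ with both summands nontrivial. Then $K$ bounds a decomposing $2$-sphere $P$ meeting $K$ transversely in exactly two points; equivalently $A:=P\cap\bigl(S^3\setminus N(K)\bigr)$ is an essential meridional annulus in the knot exterior, incompressible and boundary-incompressible precisely because both summands are nontrivial. Isotope $P$ (keeping $|P\cap K|=2$) so as to minimize $|P\cap S^2|$. Innermost-disk surgeries remove trivial circles of $P\cap S^2$, and outermost-arc/boundary-compression arguments—using incompressibility of $A$ together with the fact that each side of $S^2$ is a trivial tangle—reduce $P\cap S^2$ to a single circle $c$. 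This $c$ cuts $P$ into a disk $D_+$ lying above $S^2$ and a disk $D_-$ lying below, meeting $K$ in a total of two points distributed between them. In each distribution I would convert $D_+$ and $D_-$ into honest compressing disks with \emph{disjoint} boundaries: a disk disjoint from $K$ already bounds a compressing disk on its side, while a disk meeting a strand can be rebuilt using a bridge disk for that strand (this is exactly where triviality of the tangles is used). The resulting boundaries are disjoint curves in $\mathcal{D}(X_{2n},B_-)$ and $\mathcal{D}(X_{2n},B_+)$, exhibiting a path of length one and giving $d(K,S^2)\le 1$.

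The main obstacle is the reduction of $P\cap S^2$ to a single circle while keeping control of how the two points of $P\cap K$ and the strands of the two tangles sit relative to $c$. In particular, one must rule out, or directly handle, the configuration in which every strand of the relevant tangle crosses $c$, since there the naive ``encircle a bridge'' compressing disks all meet $c$; this is precisely where minimality of $|P\cap S^2|$ and the incompressibility of the decomposing annulus must be leveraged to guarantee that a disjoint compressing disk exists on each side. Combining the two inequalities yields $d(K,S^2)=1$.
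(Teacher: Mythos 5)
The paper offers no proof of this statement to compare against: it is quoted directly from \cite{ozawa_takao_2013}. Judged on its own, your lower bound is complete and correct. A distance-$0$ vertex gives an essential curve $c$ bounding disks on both sides, the sphere $D_+\cup_c D_-$ is disjoint from $K$, and since each side of $c$ in $X_{2n}$ carries at least two punctures, $K$ meets both complementary balls of that sphere, contradicting connectedness. That half needs nothing further.

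The upper bound contains a genuine gap, and you have located it yourself. First, the reduction of $P\cap S^2$ to a single circle is asserted, not proved, and the tools you name do not deliver it: since the two points of $P\cap K$ can be pushed off $S^2$, the intersection $P\cap S^2$ consists only of circles, so there is no outermost-arc/boundary-compression step available; what minimality and the innermost-disk argument actually give is that every surviving circle is essential in $X_{2n}$ and separates the two points of $P\cap K$ on $P$, leaving $P$ cut into two once-punctured disks and possibly many annuli. Eliminating those annuli requires a separate argument inside the genus-$n$ handlebody $B_\pm\setminus N(T_\pm)$ (compressible versus boundary-parallel, including the case where the parallelism runs over $\partial N(T_\pm)$), which you do not supply. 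Second, even granting a single circle $c$, the phrase ``rebuilt using a bridge disk'' is not yet a construction: the sub-disk of $P$ on one side may meet one strand twice or two strands once each (the distribution $(2,0)$ versus $(1,1)$ behaves differently), the bridge disks of the remaining strands may intersect $c$, and the sub-tangles cut off by a punctured disk need not be trivial tangles of the balls they lie in, so producing a compressing disk on each side with boundary disjoint from $c$ requires a case analysis you have only gestured at. Since these two steps are the entire content of the theorem beyond the easy lower bound, the proposal is a correct outline of the standard strategy rather than a proof; either these lemmas must be supplied or the result should simply be cited, as the paper does.
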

%%%%%
\vspace{1em}

The Hempel distance of a knot also has implications on the types of essential surfaces that may exist in the knot exterior.  For instance,
%%%%%
\begin{theorem}\cite{Bachman2003DistanceAB}\label{thm:Hempel_distance_essential_surface}
    Let $F$ be an orientable essential surface properly embedded in the exterior of a knot $K$ in bridge position. Then the Hempel distance of $K$ is bounded above by twice the genus of $F$ plus $\vert \partial F\vert$.
\end{theorem}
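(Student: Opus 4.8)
The plan is to bound the Hempel distance of the knot $K$ by producing an explicit short path in the arc and curve graph $\mathcal{AC}(X_{2n})$ between $\mathcal{D}(X_{2n}, B_-)$ and $\mathcal{D}(X_{2n}, B_+)$, where the length of the path is controlled by the topology of the essential surface $F$. The key idea is that an essential surface $F$ in the knot exterior can be isotoped into a nice position with respect to the bridge sphere $S$, and the intersection $F \cap S$ then yields a collection of arcs and curves on the punctured sphere $X_{2n}$ that interpolate between disks in $B_-$ and disks in $B_+$.

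First I would put $F$ into a suitable normal form relative to $S$. Since $F$ is essential (incompressible and boundary-incompressible) in the exterior $S^3 \backslash L$, and $S$ is the bridge sphere, I would isotope $F$ to minimize the number of components of $F \cap S$. Standard incompressibility arguments then guarantee that no component of $F \cap S$ is an inessential curve on $X_{2n}$ bounding a disk in $F$ (such a curve could be removed by an innermost-disk isotopy, contradicting minimality), and similarly no component is boundary-parallel in $F$. The components of $F \cap S$ are thus essential arcs and curves on $X_{2n}$, giving vertices of $\mathcal{AC}(X_{2n})$, and the pieces of $F$ lying in $B_+$ and $B_-$ are incompressible, so their boundary curves on $X_{2n}$ relate adjacent intersection curves by disjointness.

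Next, I would extract the path. The surface $F \cap B_+$ is an incompressible and boundary-incompressible subsurface of the ball-with-tangle $B_+$; one of its boundary curves on $X_{2n}$ must bound a disk in $B_+$, placing a vertex in $\mathcal{D}(X_{2n}, B_+)$, and symmetrically for $B_-$. The remaining intersection curves of $F$ with $S$ form a sequence of mutually disjoint (hence adjacent-or-equal) curves, and consecutive pieces of $F$ cut along $S$ certify edges in $\mathcal{AC}(X_{2n})$. Counting the curves and arcs of $F \cap S$ via an Euler-characteristic argument, the number of such intersection components — and hence the length of the resulting path — is bounded by a quantity controlled by $2\,\mathrm{genus}(F) + |\partial F|$.

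The main obstacle I expect is the bookkeeping that converts the intersection pattern $F \cap S$ into a genuine \emph{path} in $\mathcal{AC}(X_{2n})$ with the correct length bound, rather than merely a disjoint collection of vertices. Specifically, after normalizing $F$ one must argue that the components of $F \cap S$ can be linearly ordered so that consecutive components are disjoint (providing edges) and that the two ends of this ordering land in the two disk complexes; this requires a careful analysis of how the planar or higher-genus pieces of $F \cap B_\pm$ are glued, and the Euler-characteristic count must be organized so that each ``turn'' in the path is charged against a handle or a boundary component of $F$. Handling arcs (as opposed to closed curves) of $F \cap S$, which contribute the $|\partial F|$ term, is the delicate part, since their endpoints lie on the punctures and one must verify they remain essential in $X_{2n}$ and contribute correctly to the distance estimate.
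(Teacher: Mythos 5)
This theorem is quoted in the paper from Bachman--Schleimer \cite{Bachman2003DistanceAB} without proof, so the comparison here is against their original argument. Your proposal contains a genuine gap, and it shows up in a telling way: if your argument worked as written, it would prove far too much. You observe (correctly) that after minimizing $|F\cap S|$ all components of $F\cap S$ are essential and \emph{mutually disjoint}, hence they span a simplex in $\mathcal{AC}(X_{2n})$ and any two of them are at distance at most $1$. You then claim that some boundary curve of the incompressible piece $F\cap B_+$ must bound a disk in $B_+$, and symmetrically for $B_-$. If both claims held, you would conclude $d(K,S)\leq 1$ for \emph{every} knot admitting an essential surface --- in particular for every knot, via a Seifert surface --- which is false and would make the genus term in the bound irrelevant. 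The faulty step is the claim itself: an incompressible, $\partial$-incompressible piece of $F$ in the tangle complement $B_+\setminus T$ (a genus-$n$ handlebody) need not have any boundary component bounding a disk there; incompressibility of $F_\pm$ is exactly what prevents you from landing in $\mathcal{D}(X_{2n},B_\pm)$ for free. Consequently the components of $F\cap S$ for a single fixed bridge sphere do not furnish a path between the two disk sets at all, and there is no linear ordering of them to extract.

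The missing idea is a \emph{one-parameter} family interpolating between the two sides. Bachman and Schleimer run a sweep-out of $(S^3,K)$ by level spheres $S_t$ from the lower tangle's spine to the upper tangle's spine: for $t$ near one end every essential curve of $F\cap S_t$ lies in $\mathcal{D}(X_{2n},B_-)$, for $t$ near the other end in $\mathcal{D}(X_{2n},B_+)$, and the isotopy class of $F\cap S_t$ changes only at saddle tangencies, each of which moves the curve system a distance at most $1$ in $\mathcal{AC}(X_{2n})$. The path is indexed by the sweep parameter $t$, not by the components of a single intersection, and the number of essential saddles is bounded by $-\chi(F)=2g(F)+|\partial F|-2+2$ via exactly the Euler-characteristic bookkeeping you anticipate. (An equivalent formulation iteratively $\partial$-compresses $F\cap B_\pm$ in the handlebodies $B_\pm\setminus T$, each $\partial$-compression contributing one edge of the path and reducing $-\chi$ by one, terminating in disks.) Your Euler-characteristic instinct and your normalization of $F$ are the right ingredients, but without the sweep-out (or the $\partial$-compression sequence) there is no path to count.
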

%%%%%
\vspace{1em}

By bounding the genus of essential surfaces in the knot exterior, the authors of \cite{Bachman2003DistanceAB} conclude that for large enough distances, there cannot be any essential tori in the knot complement.  For our purposes, we focus on how Theorem~\ref{thm:Hempel_distance_essential_surface} applies to Seifert surfaces.  The \emph{genus of a knot} is defined to be the smallest genus of any orientable embedded spanning surface for $K$ in $S^3$; therefore, Theorem~\ref{thm:Hempel_distance_essential_surface} implies the following: 

%%%%%
\begin{corollary}\cite{Bachman2003DistanceAB}\label{cor:knot_genus_Hempel_distance}
    Suppose $K$ is a knot in bridge position with $d(K,S^2)=M$.  Then the genus of $K$ is at least $\frac{1}{2}(M-1).$ 
\end{corollary}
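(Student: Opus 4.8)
The plan is to feed a minimal genus Seifert surface for $K$ into Theorem~\ref{thm:Hempel_distance_essential_surface} and read off the bound. Recall that $g(K)$ is realized by an orientable spanning surface $F_0 \subset S^3$ with $\bdry F_0 = K$ and $g(F_0) = g(K)$. First I would remove an open collar of $K$, turning $F_0$ into a surface $F$ properly embedded in the exterior $S^3 \backslash K$, with the same genus and with $\bdry F$ a single longitude on $\bdry N(K)$; thus $g(F) = g(K)$ and $\vert \bdry F \vert = 1$.

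The step that actually carries content is checking that $F$ is \emph{essential}, so that Theorem~\ref{thm:Hempel_distance_essential_surface} applies. The surface $F$ is orientable by construction. When $K$ is nontrivial, $F$ is incompressible — a compressing disk would let us surger $F$ to a spanning surface of strictly smaller genus, contradicting minimality — and it is boundary-incompressible by the standard argument for minimal genus Seifert surfaces, hence essential. When $K$ is the unknot, $F$ is an essential meridian disk of the solid-torus exterior, which still qualifies. I would record this as the one genuinely non-formal point.

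With $F$ essential, Theorem~\ref{thm:Hempel_distance_essential_surface} gives
\[
M = d(K,S^2) \le 2\,g(F) + \vert \bdry F \vert = 2\,g(K) + 1,
\]
and rearranging yields $g(K) \ge \tfrac{1}{2}(M-1)$, as claimed. (In particular, applied to the unknot this recovers $M \le 1$, consistent with the bound being vacuous in that case, so no separate edge-case analysis is strictly needed.)

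I expect the main obstacle to be purely the verification that a minimal genus Seifert surface is essential: incompressibility is immediate from genus minimality, while boundary-incompressibility is the slightly more delicate but standard fact, and one must also confirm that passing from $F_0 \subset S^3$ to $F \subset S^3 \backslash K$ preserves the genus and produces exactly one boundary curve. Everything after that is the one-line arithmetic rearrangement above.
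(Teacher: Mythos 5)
Your proposal is exactly the derivation the paper intends: apply Theorem~\ref{thm:Hempel_distance_essential_surface} to a minimal genus Seifert surface (which is essential, orientable, with one boundary component), giving $M \le 2g(K)+1$. The paper leaves the essentiality check implicit, so your write-up simply fills in the same argument with more care.
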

%%%%%
\vspace{1em}

This result is very important to the construction in this paper, as a lower bound on the genus of Seifert surfaces will allow us to distinguish distinct knots. 
Also of particular importance to our work is the following result of \cite{Bachman2003DistanceAB}:
%%%%%
\begin{theorem}\cite{Bachman2003DistanceAB}\label{thm:Hempel_distance_hyperbolic_volume}
    If $K$ is a knot whose distance is at least 3 with respect to some bridge sphere $S^2$, then the complement of $K$ is hyperbolic of finite volume.
\end{theorem}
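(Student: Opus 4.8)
The plan is to invoke Thurston's hyperbolization theorem for knot exteriors, which reduces the claim to the purely topological assertion that $E(K) = S^3 \setminus \mathrm{int}(N(K))$ is irreducible, boundary-irreducible, atoroidal, and not Seifert fibered; equivalently, that $K$ is neither the unknot, a torus knot, nor a satellite knot. Each of these exceptional classes is detected by a low-complexity essential surface properly embedded in $E(K)$, and I would rule all of them out simultaneously using Theorem~\ref{thm:Hempel_distance_essential_surface}: if $F \subset E(K)$ is an orientable essential surface, then $d(K,S^2) \leq 2g(F) + |\partial F|$, so the hypothesis $d(K,S^2) \geq 3$ forbids any essential surface with $2g(F) + |\partial F| \leq 2$.

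First I would rule out the unknot. If $K$ were trivial, $E(K)$ would be a solid torus and hence contain an essential compressing disk $F$, with $g(F)=0$ and $|\partial F| = 1$, giving $2g(F)+|\partial F| = 1 < 3$, contradicting Theorem~\ref{thm:Hempel_distance_essential_surface}. Thus $E(K)$ is boundary-irreducible; since the exterior of any knot in $S^3$ is irreducible (an essential sphere would bound a ball disjoint from the connected set $N(K)$), $E(K)$ is irreducible and boundary-irreducible.

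Next I would rule out satellite and torus knots. A satellite knot has, by definition, an incompressible non-boundary-parallel torus in its exterior; such a torus $F$ has $g(F)=1$ and $|\partial F|=0$, so $2g(F)+|\partial F| = 2 < 3$, again contradicting the bound, so $E(K)$ is atoroidal. (Composite knots are a special case, since a connect-sum sphere restricts to an essential annulus or a swallow-follow torus; alternatively one may cite Theorem~\ref{thm:composite_distance_one}.) For a torus knot, the exterior is Seifert fibered over a disk with two cone points and so contains an essential vertical annulus $F$, with $g(F)=0$ and $|\partial F|=2$, whence $2g(F)+|\partial F| = 2 < 3$; the bound excludes this, so $K$ is not a torus knot and, more generally, $E(K)$ is not Seifert fibered. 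It is precisely because annuli and tori both realize complexity $2$ that the threshold in the hypothesis must be $3$.

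Having established that $E(K)$ is irreducible, boundary-irreducible, atoroidal, and not Seifert fibered, Thurston's hyperbolization theorem furnishes a complete finite-volume hyperbolic structure on the interior of $E(K)$, i.e. on $S^3 \setminus K$. The main obstacle is not in the distance estimates, which are immediate from Theorem~\ref{thm:Hempel_distance_essential_surface}, but in correctly matching each exceptional knot class to the essential surface it forces into the exterior — in particular verifying that torus knot exteriors genuinely contain essential annuli and that satellite exteriors contain essential tori — together with the appeal to geometrization, which is the substantive external input.
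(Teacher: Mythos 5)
Your argument is correct, and it is essentially the argument given in the cited source: the paper itself states this theorem as an imported result from \cite{Bachman2003DistanceAB} without proof, and Bachman--Schleimer deduce it there exactly as you do, by applying their surface bound (Theorem~\ref{thm:Hempel_distance_essential_surface}) to rule out essential disks, annuli, and tori of complexity $2g(F)+|\partial F|\leq 2$ (hence the unknot, torus knots, and satellite knots) and then invoking Thurston's hyperbolization for the Haken manifold $E(K)$. Your case analysis matching each exceptional class to its forced low-complexity essential surface is accurate, so there is nothing to correct.
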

%%%%%
\vspace{1em}

%%%%%%%%%%%%%%%%%%%%%%%%%%%%%%%%%%%%%%%%

\section{Plats and the Double Branched Cover}\label{sec:Lifts_of Plats}
In this section, we utilize the fact that a link $L$ in plat position is well suited to explicitly construct the double branched cover of $S^3$ over $L$ with a Heegaard splitting inherited from $L$.  We then relate the Hempel distance of $(Y,H)$ and of $(S^3, L)$, proving Proposition~\ref{introprop:knot_distance_bounded_below} and Lemma~\ref{lem:hempel_distances_increasing_beta_and_phi}, which we will use in our construction in Section~\ref{sec:Generating_a_Sequence_of_Hyperbolic_Knots}.

%%%%%%%%%%%%%%%%%%%%%%%%%%%%%%%%%%%%
\subsection{Constructing the Double Branched Cover}\label{subsec:Building_the_Double_Branched_Cover}
Given a knot $K=\widehat{\beta}$ for $\beta \in B_{2n}$ and a bridge splitting $(K, X_{2n})$, where $X_{2n}$ is $S^2$ with $2n$ marked points (punctures) where $L \cap S^2$.   We view $S^3$ as $\mathbb{R}^3 \cup \{\infty\}$ and $X_{2n}$ as $\{(x,y,z) | z=0\} \cup \{\infty\}$.  We see that $X_{2n}$ splits $K = \widehat{\beta}$ into $\beta=\beta_1\beta_2$.  Perform a sequence of Reidemeister $2$ moves across $X_{2n}$ so that $\beta = \beta_1 \beta_2 (\beta_2)^{-1} \beta_2$, such that $\beta \cap \{(x,y,z)|z\geq 0\} = \beta_1\beta_2$ and $\beta \cap \{(x,y,z)|z\leq 0\} = \e$, the empty word.  Then $K \cap \{(x,y,z) | z\geq 0\} =T_\beta$ and $K \cap \{(x,y,z) | z \leq 0\} = \overline{T_\e}$.  See Figure~\ref{fig:isotopysotopisnice} for an example. 

%%%%%
\begin{remark}
    The operation in the previous paragraph to obtain $K = T_\beta \cup \overline{T_\e}$ is not strictly speaking necessary for the following construction.  However, it makes calculations significantly easier, as this will translate to the $\alpha$ curves in the Heegaard splitting being standard.  
\end{remark}
%%%%%
\vspace{1em}

%%%%%
\begin{figure}[ht]
    \centering
    \includegraphics[scale=0.8]{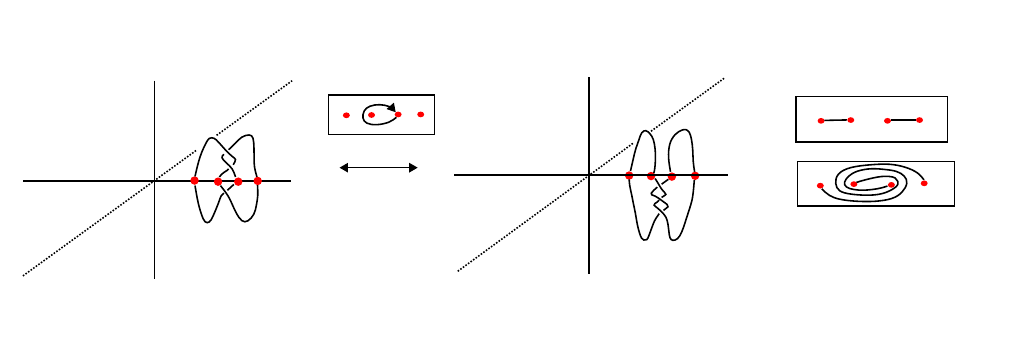}
    \caption{Performing the isotopy indicated untangles the arcs in the upper 3-ball at the expense of tangling further the arcs in the lower 3-ball. The projections of the arcs onto the splitting sphere is the rightmost figure; these are \emph{shadows} as defined below. }
    \label{fig:isotopysotopisnice}
\end{figure}
%%%%%

A \emph{shadow diagram} on $X_{2n}$ is a collection $\Scal=\{a_1, ..., a_n\}$ of properly embedded pairwise disjoint arcs, each of which is called a \emph{shadow}.  Given any trivial tangle $T$, there is a shadow diagram such that $T$ is isotopic on to $\Scal$; in this case, we say that $\Scal$ is a shadow diagram fro $T$.  In general, there are many such shadow diagrams; see a forthcoming paper of the first author for further discussion.  If $\Scal=\{a_i\}_{i=1}^n$ is a shadow diagram for $T=\{t_i\}_{i=1}^n$ such that $t_i$ is isotopic onto $a_i$ for $1\leq i \leq n$, then each pair $t_i, a_i$ cobound an embedded disk in $B_+$ called a \emph{bridge disk}.  
We fix a shadow diagram $\Scal_\e$ for $\overline{T}_\e$ as in Figure~\ref{fig:isotopysotopisnice}.  We choose any shadow diagram $\Scal_\beta$ which $T_\beta$ is isotopic onto.  

\begin{comment} %Original
    We have $K=\hat{\beta} = T_\beta \cup \overline{T_\e}$, such that $T_\beta$ and $T_\e$ are each trivial tangles.  We choose a \emph{shadow diagram} denoted $\Scal_0=\{a_1,a_2,\dots,a_n\}$ for $T_\e$, which is a collection of $n$ arcs on $X_{2n}$ with $a_i$ joining punctures $2i-1,2i$ for $i=1,\dots, n.$ this is illustrated in the right figure of Figure~\ref{fig:isotopysotopisnice}.  Notice the arc $a_i$ along with $t_i$ in $T_0$ cobound a disk! Additionally, there exists (many) shadow diagrams for $T_{\beta}.$  Select one, say, $\Scal_\beta=\{b_1,b_2,\dots,b_n\}$ for $T_\beta$.  That is, each arc $b_i$ in $\Scal_\beta$ along with the arc $t_i\in T_{\beta}$ cobound a disk.  We call each $a_i\in \Scal_0$ and each $b_i\in \Scal_\beta$ a \emph{shadow}.  In general, the shadow diagram  $\Scal_\beta$ will be considerably more complicated than $\Scal_0$ is.

    For more information on shadow diagrams see Carolyns forthcoming paper. 
\end{comment}
 
Take $S^3= B^3 \cup_{X_{2n}}\overline{B^3}$.  Let $\psi \colon Y(K) \ra S^3$ be the double branched covering map of $S^3$ over $K$.  In $Y(K)$, $X_{2n}$ lifts to the closed surface $F_{n-1}.$  
$B^3_+$ will lift to a genus $n-1$ handlebody $H_+$ in $Y(K)$ with boundary $F_{n-1}$.  Similarly, $B^3_-$ lifts to $H_-$.  Therefore, the genus $0$ splitting $S^3=B^3_+ \cup_{X_{2n}} B^3_-$ induces a Heegaard splitting $Y(K)=H_+\cup_\varphi \overline{H_-}$ where $\varphi \colon F_{n-1} \ra F_{n-1}$ is the splitting map.  We say that $\varphi$ is the splitting map \emph{induced} by $\beta$.  We can use the shadow diagrams $\Scal_0, \Scal_\beta$ to determine diagrammatically what $\varphi$ is. 

\begin{figure}[ht]
    \centering
    \includegraphics[width=0.7\linewidth]{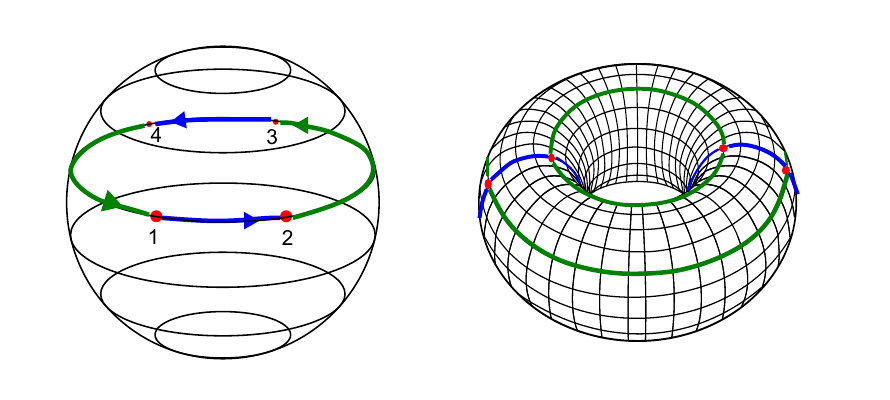}
    \caption{The double branched cover of $X_4$ is a torus.}
    \label{fig:doublebranchedcover}
\end{figure}

Any shadow diagram $\Scal$ on $X_{2n}$ will lift under the double-branched covering map to $n$ nonseparating curves on $F_{n-1}$.   Assume a shadow $a$ has a bridge disk $D_a$ in $B^3_+$ with $\bdry D_a = a \cup t$, where $t$ is a strand in the upper tangle for $K$.  Then $D_a$, lifted to the double branched cover $Y(K)$, becomes a two disks which are glued on $t$ (because $t$ is part of the branched set).  This, $D_a$ lifts to an embedded disk in $H_+$ whose boundary is the curve which is the double of $a$.  
When we apply this to a shadow diagram $\Scal,$ we get $n$ disjoint compressing curves for $H_+$.  It must be that one such curve is homologically a linear combination of the other curves.  From this, we have the following: 

%%%%%
\begin{lemma}\label{lem:shadows_lift_disks}
    Let $K=T_1 \cup \overline{T_2}$ be a knot in $n$ bridge position and $\Scal_1, \Scal_2$ be shadow diagrams for $T_1, T_2$ respectively.  Let $\psi \colon Y(K) \ra S^3$ be the double branched covering map inducing $Y(K) = H_+ \cup_\varphi \overline{H_-}$.  Then $\psi^{-1}(\Scal_1)$ is a set of $n$ compressing curves in $\bdry H_+$ such that any choice of $n-1$ of these curves defines the handlebody $H_+$.  Similarly, $\psi^{-1}(\Scal_2)$ is a set of $n$ compressing curves in $\bdry H_-$ such that any choice of $n-1$ of these curves defines the handlebody $H_-$.
\end{lemma}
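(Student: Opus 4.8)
The plan is to work entirely with the compressing disks produced by the construction and to reduce the statement to a cutting argument. Recall from the paragraphs preceding the lemma that each shadow $a_i \in \Scal_1$ cobounds a bridge disk $D_i \subset B^3_+$ with a branch arc $t_i$, and that this bridge disk lifts to an embedded compressing disk $\widetilde D_i \subset H_+$ with $\bdry \widetilde D_i = \widehat a_i := \psi^{-1}(a_i)$, a single simple closed curve (the arc $a_i$ joins two branch points, so its preimage is connected). Thus $\psi^{-1}(\Scal_1) = \{\widehat a_1, \dots, \widehat a_n\}$ is a family of $n$ disjoint compressing curves for $H_+$, which gives the first assertion. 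Since $H_+$ has genus $n-1$, it then suffices to show that any $n-1$ of the disjoint disks $\widetilde D_1, \dots, \widetilde D_n$ cut $H_+$ into a $3$-ball; such a family is by definition a complete meridian system and so defines $H_+$. The statement for $H_-$ follows verbatim after replacing $(B^3_+, \Scal_1)$ by $(B^3_-, \Scal_2)$.

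First I would understand the total cut $H_+ \setminus \bigcup_i \widetilde D_i$. On the branched surface $F_{n-1} = \psi^{-1}(X_{2n})$, the curves $\widehat a_i$ are disjoint, and their complement is the restriction of the branched cover to $P := X_{2n} \setminus \bigcup_i a_i$. The key observation is that this restricted cover is \emph{disconnected}: the surface $P$ is planar, with $\pi_1(P)$ generated by loops encircling the individual arcs $a_i$, and each such loop encloses exactly the two branch points that are the endpoints of $a_i$, hence has trivial $\mathbb{Z}/2$ monodromy. Therefore $\psi^{-1}(P) = P^{+} \sqcup P^{-}$ consists of two copies of the $n$-holed sphere $P$, so the curves $\bigcup_i \widehat a_i$ separate $F_{n-1}$ into the two planar pieces $P^{\pm}$. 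Cutting $H_+$ along all $n$ disks then produces a $3$-manifold $W$ whose boundary is obtained by capping the $n$ boundary circles of each $P^{\pm}$ with copies of the $\widetilde D_i$; this caps each $P^{\pm}$ to a sphere, so $\bdry W = S^2 \sqcup S^2$. Since cutting a handlebody along properly embedded disks always yields handlebodies, and a handlebody with $2$-sphere boundary is a ball, $W$ is a disjoint union of two $3$-balls $B^{+} \sqcup B^{-}$.

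Finally, I would use the deck involution $\tau$ of the branched cover to locate the two sides of each disk. Because the cover over $P$ is the trivial double cover, $\tau$ interchanges $P^{+}$ and $P^{-}$, and hence interchanges the two balls $B^{+}$ and $B^{-}$; at the same time $\tau$ preserves each $\widetilde D_i$ setwise while exchanging its two sides, as it is the sheet-swap fixing the branch arc $t_i \subset \widetilde D_i$. It follows that the two sides of every $\widetilde D_i$ lie on the two \emph{different} balls $B^{+}, B^{-}$. Consequently, omitting a single disk $\widetilde D_j$ — that is, regluing $B^{+}$ to $B^{-}$ along $\widetilde D_j$ — yields a single $3$-ball, so the remaining $n-1$ disks cut $H_+$ into a ball. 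This shows that any $n-1$ of the $\widehat a_i$ define $H_+$, and it records in particular the single homological relation $\sum_i [\widehat a_i] = 0$ anticipated in the text. The main obstacle is the connectivity analysis of the branched cover over the arc complement $P$ together with the sidedness bookkeeping via $\tau$; once these are in hand, the reduction to a complete meridian system is routine, and the argument for $H_-$ is identical.
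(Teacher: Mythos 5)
Your proof is correct, and it is worth noting how it relates to what the paper actually does. The paper does not give a formal proof of this lemma at all: it relies on the two paragraphs preceding the statement, which establish exactly your first step (each shadow's bridge disk $D_a$, with $\bdry D_a = a \cup t$, lifts to a single embedded compressing disk in $H_+$ whose boundary is the double of $a$), and then simply remarks that one of the $n$ resulting curves must be a homological combination of the others, concluding ``from this, we have the following.'' The assertion that \emph{any} $n-1$ of the curves form a complete meridian system is left unjustified there. Your cutting argument supplies precisely this missing piece: the observation that the restricted cover over $P = X_{2n}\setminus\bigcup_i a_i$ is the trivial double cover (each peripheral loop encloses two branch points), so that cutting along all $n$ lifted disks yields two balls $B^{\pm}$ interchanged by the deck involution $\tau$, with the two sides of each disk on opposite balls; regluing along any one disk then gives a single ball, so the remaining $n-1$ disks are a cut system. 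This is a genuine strengthening rather than a different route --- same starting point, but with the ``any $n-1$'' claim actually proved --- and it also recovers, rather than assumes, the homological relation $\sum_i[\widehat a_i]=0$ (the $\widehat a_i$ jointly bound $P^+$). The one point I would make explicit is why $B^+\neq B^-$: this follows because a handlebody has connected boundary, so a connected piece of the cut-open manifold cannot carry both sphere components; with that sentence added, the argument is complete.
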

%%%%%
\vspace{1em}

Therefore, we can see that $H_+$ is defined by $n-1$ curves in $\psi^{-1}(\Scal_\beta)$ and $H_-$ is defined by $n-1$ curves in $\psi^{-1}(\Scal_0)$.  However, we can see that $\Scal_\beta$ is precisely the image of $\Scal_0$ under the action of $\beta$ on $X_{2n}$.  The trace of this isotopy is a set of bridge disks for $T_\beta$. 

%%%%%
\begin{figure}[ht]
    \centering
    \includegraphics[scale=0.8]{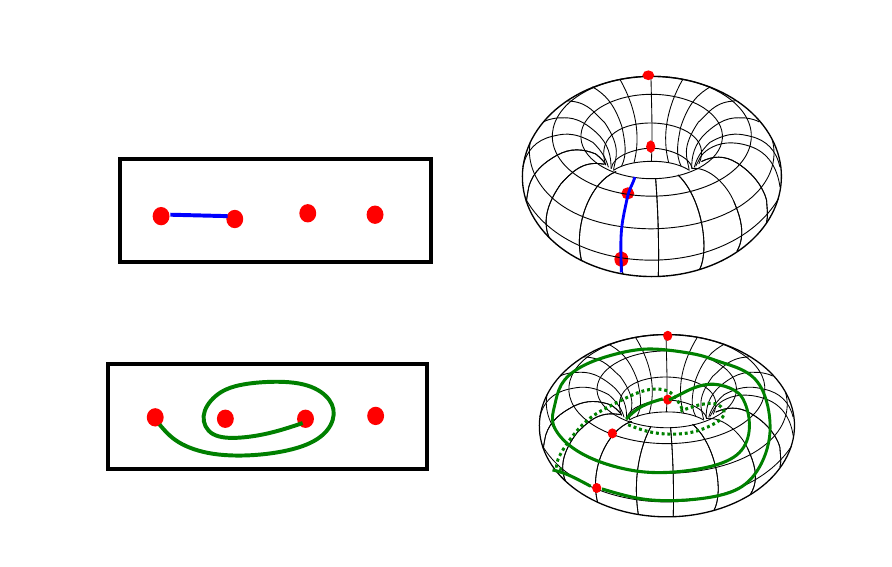}
    \caption{Taking the preimage of one arc from each projection, we obtain two curves on a torus, each bounds a disk.  The green curve is a $(3,1)$-curve.  So, the double branched cover over the trefoil is $L(3,1).$ Compare with Figure \ref{fig:isotopysotopisnice}.}
    \label{fig:SeeingHsplitting}
\end{figure}
%%%%%

%%%%%%%%%%%%%%%%%%%%%%%%%%%%%%%%%%%%
\subsection{The Double Branched Cover and Hempel Distance}\label{subsec:Double_Branched_Cover_and_Hempel_Distance}

\begin{comment}
    Let $(K,S^2)$ be a knot $K$ in $S^3$ plat position, where $S^3=B_+\cup_{S^2} \overline{B_-}$.  The double branched cover of $S^3$ over $K$ is called $Y(K)$, and denote the branched covering map by $\psi \colon Y(K) \ra S^3$.  Let $X_{2n}$ denote the sphere with $2n$ punctures.  In $Y(K)$, $X_{2n}$ lifts to closed surface $F_{n-1}.$  $B_+$ and $B_-$ lift to genus $n-1$ handlebodies $H_+$ and $H_-$, respectively.  Thus, $(K,S)$ induces a Heegaard splitting $Y(K)=H_+ \cup_{\phi} H_-$ where $F_{n-1} = \bdry H_+ = \bdry H_-$ and $\phi \colon F_{n-1} \ra F_{n-1}$ is the splitting map. See Section~\ref{subsec:Building the Double Branched Cover} for more discussion.  
\end{comment}

We take a plat $(K, X_{2n})$ and $\psi \colon Y(K) \ra S^3$ the double branched cover such that $\psi^{-1}(X_{2n})=F_{n-1}$.  

%%%%%
\begin{lemma} \label{lem:arc_curve_complex_embed}
    $\mathcal{AC}(X_{2n})$ embeds into $\mathcal{C}(F_{n-1})$.
\end{lemma}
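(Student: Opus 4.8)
The plan is to realize the embedding by the lifting (preimage) map of the branched cover $\psi \colon F_{n-1} \to X_{2n}$, whose nontrivial deck transformation is the hyperelliptic involution $\tau$ whose $2n$ fixed points are exactly the preimages of the marked points of $X_{2n}$. First I would define a map $\Phi \colon \mathcal{AC}(X_{2n}) \to \mathcal{C}(F_{n-1})$ on vertices. For an essential arc $a$ (with its two endpoints at marked points), the two lifts of the open arc are glued along the two fixed points lying over its endpoints, so $\psi^{-1}(a)$ is a single $\tau$-invariant simple closed curve $\tilde a$; set $\Phi(a) = \tilde a$. For an essential simple closed curve $c$ separating the marked points into sets of size $k$ and $2n-k$, the monodromy homomorphism to $\mathbb{Z}/2$ sends $c$ to $k \bmod 2$, so $\psi^{-1}(c)$ is a single curve when $k$ is odd and a pair of curves interchanged by $\tau$ when $k$ is even; in either case I set $\Phi(c)$ to be one component, the choice being canonical up to $\tau$.

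The first thing to verify is that $\Phi$ actually lands in $\mathcal{C}(F_{n-1})$, i.e.\ that every lift is \emph{essential}. The cleanest route is an equivariance argument: if a lift bounded a disk $D$ in $F_{n-1}$, then by the equivariant disk theorem we may take $D$ to be $\tau$-invariant, so $\tau$ restricts to an involution of $D$. For $\tilde a$ this is impossible, since $\tau$ would fix exactly the two boundary points over $\bdry a$, while an involution of a disk with isolated fixed points can be neither a reflection (which fixes a whole arc) nor a rotation (which fixes no boundary point). For a curve lift, $\psi(D)$ would be a disk in $X_{2n}$ bounded by $c$ and containing at most one marked point, contradicting essentiality of $c$. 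A short Euler characteristic computation on the preimages of the two sides of $c$ confirms the one/two component count claimed above and the essentiality of each component.

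Next I would check that $\Phi$ is \emph{simplicial}: disjoint representatives in $X_{2n}$ have disjoint full preimages, so the chosen lifts are disjoint and adjacency is preserved. This already yields that $\Phi$ does not increase distance, which is the direction needed for Proposition~\ref{introprop:knot_distance_bounded_below}. The crux is \emph{injectivity}. Here I would invoke the Birman--Hilden correspondence \cite{LiftsofcurvesBirmanHilden}: since the genus of $F_{n-1}$ is $n-1 \geq 2$ in the cases of interest, an isotopy between two $\tau$-symmetric curves can be taken $\tau$-equivariant, hence descends through $\psi$ to an isotopy downstairs. Thus if $\Phi(u)$ and $\Phi(v)$ are isotopic, so are $u$ and $v$; applying $\tau$ handles the two-component case, and the isotopy-invariant data ``number of fixed points lying on the lift'' (two for an arc, zero for a curve) prevents an arc and a curve from ever having isotopic lifts.

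I expect this equivariant-isotopy step to be the main obstacle, since it is precisely where the Birman--Hilden machinery is indispensable and where one must be careful that the descended isotopy respects the arc/curve dichotomy; the essentiality casework and the even/odd bookkeeping for the curve lifts are the other places that demand care but are routine once the equivariant framework is in place.
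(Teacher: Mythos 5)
Your proposal follows the same route as the paper: both realize the map by taking preimages under the branched covering $\psi$ and verify that disjoint representatives lift to disjoint representatives, so adjacency is preserved. Your write-up is more careful in two spots where the paper is terse --- you track the parity of the number of marked points a curve encloses to decide whether its preimage is one curve or two (the paper asserts ``two copies'' unconditionally, which is only correct for curves enclosing an even number of punctures, the case relevant to the disk complexes), and you supply the essentiality check and the Birman--Hilden equivariant-isotopy argument for injectivity that the paper leaves implicit.
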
 

\begin{proof}
    %vertices 
    %The lift of a properly embedded arc in $X_{2n}$ is a nonseparating simple closed curve on $F_{n-1}$.  The preimage of a closed curve on $X_{2n}$ surrounding $2k+1$ punctures is two istotopic copies of a separating curve in $F_{n-1}$ that cuts off a genus $k$ subsurface.  

    The lift of a properly embedded arc in $X_{2n}$ is a nonseparating simple closed curve on $F_{n-1}$.  The preimage of a homologically nontrivial simple closed curve $c$ on $X_{2n}$ is two copies of a curve in $F_{n-1}$.  When $n=2,3$ the lifts of these curves are in fact isotopic \cite{LiftsofcurvesBirmanHilden}.  For $n\geq 4$, the components of $\psi^{-1}(c)$ may not be isotopic, but they will be disjoint (as they do not intersect the branched set).  %However, it remains true that the lifts of these curves are always either isotopic or disjoint from each other.
     
    Thus, if $v_1, v_2$ are vertices in $\mathcal{AC}(X_{2n})$, then $\psi^{-1}(v_1), \psi^{-1}(v_2)$ are vertices in $\mathcal{C}(F_{n-1})$.     %edges 
    Let $v_1, v_2$ be vertices in $\mathcal{AC}(X_{2n})$ which are connected by an edge.  That is, there are representatives of $v_1, v_2$ which are disjoint.  In particular, if $v_1, v_2$ are properly embedded arcs, this implies that $v_1, v_2$ do not share an endpoint.  Then, because the representatives are also disjoint at the branched points, we have that the lifts of their representatives are disjoint.  Therefore if $v_1, v_2$ are connected by an edge, then $\psi^{-1}(v_1), \psi^{-1}(v_2)$ are connected by an edge.  
\end{proof}
%%%%%
\vspace{1em}

%%%%%
\begin{remark}\label{remark:curve_complex_not_isomorphic}
    We note that while $\mathcal{AC}(X_{2n})$ embeds into $\mathcal{C}(F_{n-1})$, they are \emph{not} isomorphic.  Consider a separating curve on $F_2$; this projects to an inessential curve on $X_{2n}$. 
\end{remark}
%%%%%
\vspace{1em}

The lift of a curve in $\mathcal{D}(X_{2n}, B_+)$ or $\mathcal{D}(X_{2n}, B_-)$ (the curves on $X_{2n}$ that bound disks in $B_+$ or $B_-$) is two nonseparating curves, each of which bounds a disk in $H+$ (or $H_-$, respectively).  We extend the definition of the disk complex $\mathcal{D}(F, H)$, where $H$ is a handlebody bounded by a closed surface $F$, to be the set of vertices in $\mathcal{C}(F)$ which bound disks in $H$.  From this and Lemma~\ref{lem:arc_curve_complex_embed}, we have the following.  

%%%%%
\begin{lemma}\label{lem:disk_complex_embed}
    $\mathcal{D}(X_{2n}, B_+)$ embeds into $\mathcal{D}(F_{n-1}, H_+)$.  Similarly, $\mathcal{D}(X_{2n}, B_-)$ embeds into $\mathcal{D}(F_{n-1}, H_-)$.
\end{lemma}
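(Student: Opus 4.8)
The plan is to obtain Lemma~\ref{lem:disk_complex_embed} as a refinement of Lemma~\ref{lem:arc_curve_complex_embed}: the map $v \mapsto \psi^{-1}(v)$ already gives an injective, edge-preserving embedding of $\mathcal{AC}(X_{2n})$ into $\mathcal{C}(F_{n-1})$, so the only genuinely new content is that this same map carries the disk-bounding curves for $B_+$ to disk-bounding curves for $H_+$. First I would observe that the vertices of $\mathcal{D}(X_{2n}, B_+)$ are, by definition, essential simple closed curves on $X_{2n}$ that bound disks in $B_+$, hence a subset of the vertices of $\mathcal{AC}(X_{2n})$. Any essential simple closed curve on a punctured sphere encloses between $2$ and $2n-2$ punctures and is therefore homologically nontrivial, so Lemma~\ref{lem:arc_curve_complex_embed} applies verbatim to each such vertex. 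Consequently the restriction of $v \mapsto \psi^{-1}(v)$ to $\mathcal{D}(X_{2n}, B_+)$ is automatically injective and sends edges to edges.

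The key step is then to check that the image lands in $\mathcal{D}(F_{n-1}, H_+)$. Let $c$ be a vertex of $\mathcal{D}(X_{2n}, B_+)$ and let $D \subset B_+$ be a compressing disk with $\partial D = c$. Since $B_+$ is a $3$-ball with the upper tangle $T$ removed, $D$ is disjoint from $T$, which is exactly (part of) the branch locus of $\psi$. Because $D$ is simply connected and misses the branch set, the double branched cover restricts to a trivial double cover over $D$, so $\psi^{-1}(D)$ is a pair of disjoint embedded disks in $H_+$ whose boundaries are precisely the two components of $\psi^{-1}(c)$. Thus each component of $\psi^{-1}(c)$ bounds a disk in $H_+$, i.e.\ lies in $\mathcal{D}(F_{n-1}, H_+)$. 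This is the same disk-lifting mechanism already used in Lemma~\ref{lem:shadows_lift_disks} and in the construction of the Heegaard splitting in Section~\ref{subsec:Building_the_Double_Branched_Cover}. Combining the two paragraphs, $v \mapsto \psi^{-1}(v)$ restricts to an embedding $\mathcal{D}(X_{2n}, B_+) \hookrightarrow \mathcal{D}(F_{n-1}, H_+)$, and the identical argument with $B_-$ and $H_-$ (using that $\psi$ is also unbranched over any compressing disk for the lower tangle's complement) yields the second embedding.

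The one point requiring care is the convention, already implicit in Lemma~\ref{lem:arc_curve_complex_embed}, that $\psi^{-1}(c)$ has two components, which for $n \geq 4$ need not be isotopic. I would dispose of this by noting that the argument above shows \emph{both} components bound disks in $H_+$ in any case, so the embedding is well defined regardless of which lift is designated as the image vertex, and no separate treatment of $n = 2,3$ (isotopic lifts) versus $n \geq 4$ is needed. Beyond this bookkeeping the lemma is routine: all the substantive work—injectivity and edge-preservation—was done in Lemma~\ref{lem:arc_curve_complex_embed}, and the remaining step is a direct application of the fact that a disk disjoint from the branch locus lifts to two disjoint disks in the double branched cover. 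The main (mild) obstacle is therefore expository: stating the restriction cleanly and confirming that the property of bounding a disk is preserved under the lift.
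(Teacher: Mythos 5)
Your proposal is correct and follows essentially the same route as the paper: restrict the embedding of Lemma~\ref{lem:arc_curve_complex_embed} to disk-bounding curves and observe that a compressing disk in $B_\pm$ misses the branch locus, so its preimage is a pair of (isotopic or disjoint) disks in $H_\pm$. The extra care you take about which of the two lifted components is designated as the image vertex is a reasonable clarification but not a departure from the paper's argument.
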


\begin{proof}
   Let $c \in \mathcal{D}(X_{2n}, B_+)$.  Then by Lemma~\ref{lem:disk_complex_embed}, $\psi^{-1}(c)$ is two isotopic copies of a curve or two disjoint curves.  The disk $D$ which $c$ bounds exists away from the branched set $K$; therefore $\psi^{-1}(D)$ is two isotopic or disjoint disks in $H_+$. 
\end{proof}
%%%%%
\vspace{1em}

These results lead us to our main proposition:  

\knotdistanceboundedbelow *
%%%%%
%\begin{proposition}\label{prop:knot_distance_bounded_below}
%    If $(K, S)$ is a bridge splitting of a knot in $S^3$ and $H=H_+ \cup_\phi H_-$ is the Heegaard splitting for the double cover of $S^3$ over $K$, $Y(K)$.  Then $d(Y, H) \leq d(K, S)$.
%\end{proposition}

\begin{proof}
    Let $\psi \colon Y(K) \ra S^3$ be the branched covering map.  %Let $\alpha$ be a set of curves defining $H_+$ and $\beta$ a set of curves defining $H_-$.  Let $\alpha_1, \beta_1$ be the curves which realize the Hempel distance; that is, $d(Y,H) = d(\alpha_1, \beta_1)$.  
    It follows from work by Birman and Hilden in \cite{LiftsofcurvesBirmanHilden} that minimally intersecting arcs and curves in $X_{2n}$ lift to minimally intersecting curves in $F_{n-1}$.  Let $c_1, c_2$ be curves in $\mathcal{D}(X_{2n}, B_+), \mathcal{D}(X_{2n}, B_-)$ such that $d(K,S)=d(c_1, c_2)$.  Then $\psi^{-1}(c_1), \psi^{-1}(c_2)$ are minimally intersecting. %Notice, $\psi^{-1}(c_1), \psi^{-1}(c_2)$ do not have more intersections than $c_1, c_2$ do (because $\psi$ is a double branched cover).
    Notice, $\psi^{-1}(c_1), \psi^{-1}(c_2)$ cannot be further apart than $c_1, c_2$  (because $\psi$ is a double branched cover).
    Therefore, $d(\psi^{-1}(c_1), \psi^{-1}(c_2)) \leq d(c_1, c_2)$.  We see then that 
    $$d(Y,H) \leq d(\psi^{-1}(c_1), \psi^{-1}(c_2)) \leq d(c_1, c_2) = d(K, S)$$
    %We see that $\psi(\alpha_1) \in \mathcal{D}(X_{2n}, B_+), \psi(\beta_1) \in \mathcal{D}(X_{2n}, B_-)$.  If $d(\psi(\alpha_1, \psi(\beta_1))$      
\end{proof}
%%%%%
%\vspace{1em}
\begin{comment}
    We note that lifting a curve in the disk complex of $X_{2n}$ provides two homeomorphic curves in $S_{n-1}$ that bound disks.  When $n=2,3$ the lifts of these curves are in fact isotopic.  For higher $n,$ this is not always the case.  However, it remains true that the lifts of these curves are always either isotopic or disjoint from each other. So the lift of curves in the disk complex of $X_{2n}$ provides two curves of distance at most 1 in the curve complex of $S_{n-1}.$  Additionally, lifting arc slides to the double branched cover we obtain handle slides! This provides a nice combinatorial relationship between the generators of the disk complex of the punctured disk and the generators for the disk complex of the surface $S_{n-1}.$
\end{comment}
We say that two curves $c_1, c_2$ \emph{fill} a surface $F$ if $F \backslash \{c_1, c_2\}$ is a collection of disks, where each disk contains at most one puncture.  It is well known that if a pair of curves fill a surface then in the curve complex of the surface, their vertices  are at least distance 3 apart \cite{primeronmappingclassgroups}.

\begin{lemma}\label{lem:filling_curves}
    For $n \geq 3$, let $c_1, c_2$ be two curves which fill $X_{2n}$, then the lifts of those curves fill $F_{n-1}$.  Therefore, if $c_1, c_2$ fill $X_{2n}$, then $d(\psi^{-1}(c_1), \psi^{-1}(c_2)) \geq 3$.
\end{lemma}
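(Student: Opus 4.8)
The plan is to compute the complement of the lifts directly and then feed the result into the standard relationship between filling and distance in the curve graph. The starting point is that $\psi$ restricts to an honest double cover away from the branch points (which are exactly the $2n$ marked points of $X_{2n}$), so that taking preimages commutes with taking complements:
\[
F_{n-1}\setminus\psi^{-1}(c_1\cup c_2)=\psi^{-1}\!\bigl(X_{2n}\setminus(c_1\cup c_2)\bigr).
\]
Thus it suffices to understand how $\psi$ behaves over each complementary region of $c_1\cup c_2$ downstairs.

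First I would record the two local models for the branched cover. Since $c_1,c_2$ fill $X_{2n}$, every component $R$ of $X_{2n}\setminus(c_1\cup c_2)$ is an open disk containing at most one puncture, i.e. at most one branch point. If $R$ contains no branch point, then $\psi^{-1}(R)$ is two disjoint copies of $R$, hence two disks. If $R$ contains a single branch point, then on $\psi^{-1}(R)$ the map $\psi$ is modeled on $z\mapsto z^2$, so $\psi^{-1}(R)$ is a single disk double-covering $R$ and branched over its center. In either case the preimage is a disjoint union of disks, so $F_{n-1}\setminus\psi^{-1}(c_1\cup c_2)$ is a union of disks and the lifts $\psi^{-1}(c_1),\psi^{-1}(c_2)$ fill $F_{n-1}$, proving the first assertion.

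For the distance bound I would use $n\geq 3$, so that $F_{n-1}$ has genus at least $2$ and $\mathcal{C}(F_{n-1})$ is the usual connected curve graph. I would then appeal to the standard fact (see \cite{primeronmappingclassgroups}) that a filling pair of curves is at distance at least $3$: filling curves necessarily intersect, so they are at distance at least $2$, and any curve disjoint from both would survive inside a single complementary region, contradicting that these regions are disks; hence there is no length-$2$ path between them. Applied to the lifts, this gives $d(\psi^{-1}(c_1),\psi^{-1}(c_2))\geq 3$.

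The step I expect to be the main obstacle is the passage from the single-curve form of that standard fact to the lifts, since for $n\geq 4$ a curve $c_i$ enclosing an even number of punctures has trivial $\mathbb{Z}/2$ monodromy and so lifts to a \emph{disconnected} multicurve $\psi^{-1}(c_i)=\widetilde{c_i}\cup\tau\widetilde{c_i}$, where $\tau$ is the covering involution. One must therefore upgrade ``filling $\Rightarrow$ distance $3$'' to multicurves and be careful that a short path between \emph{one} component of $\psi^{-1}(c_1)$ and \emph{one} component of $\psi^{-1}(c_2)$ is genuinely obstructed by the filling of the \emph{full} lifts; the honest content is to rule out such mixed short paths, for which I would exploit the $\tau$-symmetry of the configuration. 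For $n=2,3$ this difficulty evaporates: by Birman--Hilden \cite{LiftsofcurvesBirmanHilden} the two components of each lift are isotopic, so each $\psi^{-1}(c_i)$ is a single vertex of $\mathcal{C}(F_{n-1})$ and the classical single-curve statement applies verbatim.
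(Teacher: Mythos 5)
The paper states this lemma without giving any proof at all, so there is no argument of the authors' to compare yours against; what follows is an assessment of your proposal on its own terms. Your proof of the first assertion is correct and complete: since $\psi$ is a genuine double cover away from the $2n$ branch points, preimage commutes with complement, and the two local models (a puncture-free complementary disk lifts to two disks because it is simply connected, a once-punctured complementary disk lifts to a single disk via the $z\mapsto z^2$ model) show that $F_{n-1}\setminus\psi^{-1}(c_1\cup c_2)$ is a union of disks. This is exactly the right use of the hypothesis that each complementary region contains \emph{at most one} puncture, and it is where $X_{2n}$ being filled in the punctured sense is needed.

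The genuine gap is the one you flag yourself and then do not close: the passage from ``the full preimages fill'' to $d(\psi^{-1}(c_1),\psi^{-1}(c_2))\geq 3$ when the preimages are disconnected, which happens for $n\geq 4$ whenever $c_i$ separates the punctures into two even subsets. The distance between multicurves relevant here (and the one consistent with how the paper uses such distances, e.g.\ in Proposition~\ref{introprop:knot_distance_bounded_below}) is the minimum over component pairs, and it is simply not true in general that a pair of filling multicurves has every component pair at distance $\geq 3$: a curve $\gamma$ disjoint from $a_1\subset\psi^{-1}(c_1)$ and $b_1\subset\psi^{-1}(c_2)$ is only forced to meet the \emph{other} components $a_2=\tau a_1$, $b_2=\tau b_1$, and such a $\gamma$ survives in $F_{n-1}\setminus(a_1\cup b_1)$, which is the union of the complementary disks reglued along $a_2\cup b_2$ and can easily support essential curves. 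Appealing to ``the $\tau$-symmetry'' does not obviously resolve this, because $\gamma$ need not be $\tau$-invariant, and $\tau\gamma$ is disjoint from the \emph{wrong} pair $(a_2,b_2)$; what one actually needs is an argument that each component pair $(a_i,b_j)$ itself fills $F_{n-1}$ (or at least admits no common disjoint essential curve), and that requires controlling how the intersection points of $c_1\cap c_2$ distribute among the four component pairs upstairs. (Your remark that the difficulty evaporates for $n=3$ is correct: every essential curve on $X_6$ encloses $2$, $3$, or $4$ punctures, so its lift is either connected or two isotopic curves cobounding an annulus, and the single-curve argument applies; so your proof is complete for $n=3$ but not for $n\geq 4$.) Since the difficulty of lower-bounding curve-complex distances in covers is precisely the nontrivial content of results such as Rafi--Schleimer on covers and the curve complex, this step should be treated as a substantive claim requiring proof, not a routine upgrade of the single-curve statement.
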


 %%%%%
%\begin{figure}[h]
%\centering
    %\includegraphics[scale=0.7]{}
        %\caption{The double branched cover of $X_4$ is a torus.  Notice on the right that the blue arcs have been doubled but the green ones have not, doubling them gives a full torus.}
        %\label{fig:doublebranchedcover}
%\end{figure}
%%%%%
%%%%%

%%%%%

%%%%%
\begin{remark}
   In the case of $n=2$, the double branched covering map induces an isometry of the arc and curve complex of $X_4$ into the curve complex of $F_1.$  It is not hard to see that all genus 1 Heegaard splittings of $S^3$ have Hempel distance 0 (as they are all stabilizations of genus 0 Heegaard splittings).  Therefore, if the plat closure of $\beta\in B_4$ is a knot and its Heegaard splitting has Hempel distance 0, $\beta$ is the unknot. This can easily be computed by obtaining the symplectic representation for the lift of $\beta$ and checking if the bottom left entry of the matrix is a 0.  This explicitly recognizes all 2-bridge unknots. 
\end{remark}
%%%%%
\vspace{1em}
%%%%%
\begin{lemma}\label{lem:beta_pA_phi_pA}
    If the braid word $\beta\in B_{2n}$ is pseudo-Anosov, then the splitting map $\varphi \colon F_{n-1} \ra F_{n-1}$ for the induced Heegaard splitting $H$ of the double branched cover $Y(\widehat{\beta})$ of $S^3$ over $\widehat{\beta}$ is pseudo-Anosov.
\end{lemma}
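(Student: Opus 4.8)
The plan is to show $\varphi$ is pseudo-Anosov by lifting the invariant foliations of $\beta$ through the double branched cover and verifying that the lifted data meets the definition of a pseudo-Anosov map on the \emph{closed} surface $F_{n-1}$. Recall from the construction in Section~\ref{subsec:Building_the_Double_Branched_Cover} that $\varphi$ is exactly the lift of the action of $\beta$ on $X_{2n}$ under the covering map $\psi\colon F_{n-1}\to X_{2n}$: by the Birman--Hilden theory \cite{LiftsofcurvesBirmanHilden}, $\beta$ lifts to a homeomorphism of $F_{n-1}$ commuting with the covering (hyperelliptic) involution, and this lift is the splitting map $\varphi$. Since $\beta$ is pseudo-Anosov, viewing it (via $\iota$) as a mapping class of the $2n$-punctured sphere, it preserves a transverse pair of measured singular foliations $(\mathcal{F}^u,\mu^u)$ and $(\mathcal{F}^s,\mu^s)$ on $X_{2n}$ with a dilatation $\lambda>1$, so that $\beta$ scales $\mathcal{F}^u$ by $\lambda$ and $\mathcal{F}^s$ by $\lambda^{-1}$; all singularities occur at the $2n$ punctures ($1$-pronged) or at interior points (at least $3$-pronged).

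First I would pull these foliations back along $\psi$. Away from the $2n$ branch points $\psi$ is a local diffeomorphism, so $\tilde{\mathcal{F}}^u:=\psi^{-1}(\mathcal{F}^u)$ and $\tilde{\mathcal{F}}^s:=\psi^{-1}(\mathcal{F}^s)$ are transverse measured foliations there, with transverse measures $\psi^*\mu^u,\psi^*\mu^s$. Because $\varphi$ covers $\beta$ and $\psi$ is equivariant, $\varphi$ preserves $\tilde{\mathcal{F}}^u,\tilde{\mathcal{F}}^s$ and scales their transverse measures by $\lambda$ and $\lambda^{-1}$. It then remains to control the behavior over the branch locus and to confirm that the result is a genuine pseudo-Anosov foliation pair.

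The key step, and the main obstacle, is the local singularity analysis over the branch points. Near a branch point the covering map has the local model $w\mapsto w^2$, under which a $k$-pronged singularity downstairs pulls back to a $(2k)$-pronged singularity upstairs. In particular a $1$-pronged singularity at a puncture lifts to a $2$-pronged point, i.e.\ a \emph{regular} point of $\tilde{\mathcal{F}}^u,\tilde{\mathcal{F}}^s$; more generally a $k$-pronged singularity at a branch point lifts to a $(2k)$-pronged point with $2k\geq 2$, so no $1$-pronged singularities ever arise over the branch locus. The interior $p$-pronged singularities ($p\geq 3$) of $\mathcal{F}^u,\mathcal{F}^s$ each have two preimages, again $p$-pronged. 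Hence the lifted foliations extend across $\psi^{-1}$ of the punctures to a transverse pair of measured foliations on the closed surface $F_{n-1}$ with no $1$-pronged singularities, preserved by $\varphi$ with dilatation $\lambda>1$. This is precisely the data exhibiting $\varphi$ as pseudo-Anosov; since $\lambda>1$, $\varphi$ is neither periodic nor reducible, so it is pseudo-Anosov by the Nielsen--Thurston classification \cite{NielsenThurston}.

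As a cross-check, one could run the same argument with the $\beta$-invariant train track $\tau$ carrying $\mathcal{F}^u$ in place of the foliation: its preimage $\psi^{-1}(\tau)$ is a $\varphi$-invariant train track on $F_{n-1}$, and the pullback preserves the incidence/transition data, so $\varphi$ inherits the same dilatation $\lambda$. Here too the only thing to verify is that $\psi^{-1}(\tau)$ is a genuine train track (no complementary monogon or bigon regions away from the former puncture points), which follows from the same $w\mapsto w^2$ local model at the branch points.
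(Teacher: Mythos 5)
Your proof takes essentially the same route as the paper's: lift the stable and unstable foliations of $\beta$ through the branched covering to obtain invariant foliations for $\varphi$ with the same dilatation $\lambda$. The paper's argument is only a three-sentence sketch, and your version correctly supplies the detail it omits --- in particular the $w\mapsto w^2$ local analysis showing $k$-pronged singularities at branch points lift to $2k$-pronged points, so no $1$-pronged singularities survive on the closed surface $F_{n-1}$.
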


\begin{proof}
    If a braid word is pseudo-Anosov, it prints two foliations on $X_{2n};$ a stable and unstable foliation.  The lift of these foliations provide stable and unstable foliations of $F_{n-1}.$ In fact, they have the same stretch factor $\lambda$. %Notice that choosing a generic pseudo-Anosov braid $\beta$ the same result holds, since lifting the train-track on the disk will provide the train-track on the lift of $\beta.$
\end{proof}
\begin{comment}
    %%%%%
\begin{remark}\label{remark:distanceatleastpower}
    When we consider the plat closures of powers of pseudo-Anosov braids, we will want to be sure that the distance of these powers increases.  This is the case! It was shown in \cite{DistancesofHeegaardsplittings} that the Hempel distance of a pseudo-Anosov Heegaard splitting grows linearly in the power of the map. Thus, by Proposition \ref{prop:beta_pA_phi_pA} and the lower bound given in Proposition \ref{prop:knot_distance_bounded_below}, the Hempel distance of powers of pseudo-Anosov braids grows \emph{at least} linearly in the power of the braid.
\end{remark}
\end{comment}
In particular, the train--track of a pseudo-Anosov braid $\beta$ on $X_{2n}$ will lift to the train--track of $\varphi$.

%%%%%%%%%%%%%%%%%%%%%%%%%%%%%%%%%%%%
\subsection{Detecting Generic Pseudo-Anosov Braids via Train-Tracks}\label{subsubsec:Train-Tracks_Method_for_Standard_Pseudo-Anosov_Braids}
Given a closed surface $F$ of genus $g \geq 2$.  In order to construct the knots in Theorem~\ref{introthm:infhypprimeknots}, we will rely on pseudo-Anosov maps $\phi \colon F \ra F$ such that, as splitting maps, the Heegaard splittings determined by powers $\phi^m$ have increasing Hempel distance as $m$ increases.  However, not all pseudo-Anosov maps have this property.  In fact, there exist pseudo-Anosov maps that fix a collection of meridian curves in the handlebody \cite{MMQuasiconvexity}. An explicit example of such an element is given in \cite{pAnotgenericref}.  

\begin{comment}
    Therefore, when we consider the double branched cover of $S^3$ over $K=\hat{\beta}$, we must take care not to choose a braid $\beta$ that lifts to such a pseudo-Anosov element in the mapping class group of the splitting surface.  
\end{comment}

    Thankfully, the authors of \cite{pAnotgenericref} also determined that these pathological pseudo-Anosov elements have unstable laminations that converge to meridians.  Moreover, the collection of such elements have measure zero in the the space of all measured laminations.  See \cite{genericpaheegaardsplittings} for a thorough explanation.  Thus, for the vast majority of pseudo-Anosov maps $\phi$, the sequence $\phi^m$ will have increasing Hempel distance as $m$ increases.  

Given a pseudo-Anosov braid $\beta$, we verify that the Hempel distance of $\widehat{\beta^m}$ increases as $m$ increases by considering its train-track.  Take $\beta \in B_{2n}$ pseudo-Anosov and $\widehat{\beta}=T_\beta\cup \overline{T_\e}$ with splitting sphere $X_{2n}$ as in Section~\ref{subsec:Building_the_Double_Branched_Cover}.  
\begin{comment}
This level sphere will intersect our plat in $2n$-points (just above the minima or the plat).  Notice that this $X_{2n}$ divides $S^3\backslash L$ into two 3-balls with $n$-arcs removed.  The lower 3-ball contains arcs that are small neighborhoods of the minima, the upper 3-ball contains all of the braid work $\beta$ and the maxima. In this setting, we call these 3-balls $B_{+}$ and $B_{-}$. We will consider $X_{2n}$ as a subspace of either $B_{+}$ or $B_{-}.$  We then consider the train-track of $\beta$ as being printed on this $X_{2n}.$
\end{comment}
  We say our pseudo-Anosov braid $\beta$ is \emph{generic} if no part of the train track contains a curve or arc on $X_{2n}$ that bound disks in either $B_{+}$ or $B_{-}$.   This involves two steps: (1) computing the train-track associated to $\beta$ and (2) determining if an arc or curve in the train--track extends to a properly embedded disk in either $B_{\pm}.$   The first step is done using an algorithm provided by Bestvina and Handel \cite{MR1308491}.  This algorithm has been implemented by Hall and is freely available \cite{TrainsAlg}. See also \cite{BraidsandDynamics} for a MatLab program implementation.  The second step can be determined for a $c \subset X_{2n}$ curve by using the fact that $c \in \mathcal{D}(X_{2n}, B_\pm)$ if and only if its homology class can be expressed as a sum of $n$ curves that surround the $n$ intersections of the bridges with the sphere $X_{2n},$ that is, curves are the curves on $X_{2n}$ which ``surround'' the punctures $i$ and $i+1$ for $i=1,n-1.$  To determine if an arc bounds a disk, we take the boundary of an annular neighborhood around the arc.  If this curve bounds a disk then necessarily the arc bounds a disk as well (the isotopy of the annular neighborhood of the arc to the arc provides an isotopy of the two disks.)

\begin{comment}  
    Additionally, the train-track associated to the braid $\beta$ will lift to the train-track of the splitting map of the double branched cover, and we may apply an algorithm of Schleimer in \cite{SSpolytimealgorthims} to ensure that the splitting map is a generic pseudo-Anosov map.  Thus, we are able to conclude that the lift of a generic pseudo-Anosov braid gives a generic pseudo-Anosov splitting map. See Proposition~\ref{prop:beta_pA_phi_pA}. Computing these train tracks is an interesting exercise in and of itself; see Example~\ref{example:3-bridgeconcreteexample}.
\end{comment}

\begin{lemma}\label{lem:hempel_distances_increasing_beta_and_phi}
    Let $n\geq 3$.  A pseudo-Anosov braid $\beta \in B_{2n}$ has the property that the Hempel distance of $\widehat{\beta^m}$ increases as $m$ increases if and only if the induced splitting map $\varphi$ for the Heegaard splitting $H$ of the double branched cover $Y(\widehat{\beta})$ of $S^3$ over $\widehat{\beta}$ has the property that the Hempel distance of $\varphi^m$ increases as $m$ increases. 
\end{lemma}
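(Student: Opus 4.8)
The plan is to reduce the statement to an equivalence between two unboundedness assertions about the \emph{same} sequence of powers. First I would record the identification of the maps: the Birman--Hilden lift sending a braid to its lift on the branched double cover is a homomorphism, so the splitting map induced by $\beta^m$ is exactly $\varphi^m$, and by Lemma~\ref{lem:beta_pA_phi_pA} each such lift is again pseudo-Anosov. Thus ``the Hempel distance of $\varphi^m$ increases as $m$ increases'' is the statement that $d(Y,H_m)\to\infty$, where $H_m$ is the splitting of $Y(\widehat{\beta^m})$, while ``the Hempel distance of $\widehat{\beta^m}$ increases'' is the statement that $d(\widehat{\beta^m},S)\to\infty$; it suffices to prove these are equivalent (the growth being linear in each case, so that ``increasing'' and ``unbounded'' coincide here).

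One direction is immediate from the work already done. Applying Proposition~\ref{introprop:knot_distance_bounded_below} to the knot $\widehat{\beta^m}$ gives $d(Y,H_m)\le d(\widehat{\beta^m},S)$ for every $m$. Hence if $d(Y,H_m)\to\infty$ then $d(\widehat{\beta^m},S)\to\infty$ as well, which is the implication ``$\varphi$-distance increases $\Rightarrow$ $\widehat{\beta}$-distance increases.''

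For the converse I would argue the contrapositive, using the train-track correspondence as the transfer mechanism: assume $d(Y,H_m)$ stays bounded and deduce that $d(\widehat{\beta^m},S)$ stays bounded. Since $F_{n-1}$ has genus $n-1\ge 2$ and $\varphi$ is pseudo-Anosov, the results of \cite{DistancesofHeegaardsplittings}, together with the characterization of the pathological elements in \cite{pAnotgenericref,genericpaheegaardsplittings}, imply that boundedness of $d(Y,H_m)$ forces $\varphi$ to be non-generic, i.e.\ its invariant train track $\widetilde\tau$ carries a curve bounding a disk in $H_+$ or $H_-$. Because the invariant train track $\tau$ of $\beta$ on $X_{2n}$ lifts to $\widetilde\tau=\psi^{-1}(\tau)$, this meridian is carried by $\psi^{-1}(\tau)$; running the disk correspondence of Lemma~\ref{lem:disk_complex_embed} backward — descending the compressing disk equivariantly through the branched cover, using that the covering involution may be taken to preserve an innermost disk — produces an arc or curve carried by $\tau$ that bounds a disk in $B_+$ or $B_-$, so that $\beta$ is itself non-generic. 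Finally, writing $d(\widehat{\beta^m},S)$ as the distance in the Gromov-hyperbolic complex $\mathcal{AC}(X_{2n})$ between the (quasiconvex) disk set $\mathcal{D}(X_{2n},B_-)$ and its image $\beta^m\!\cdot\mathcal{D}(X_{2n},B_-)$, non-genericity of $\beta$ places an invariant lamination of $\beta$ in the limit set of that disk set, and the north--south dynamics of $\beta$ on the Gromov boundary keeps the two disk sets within uniformly bounded distance for all $m$; hence $d(\widehat{\beta^m},S)$ is bounded, completing the contrapositive.

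The hard part is the backward step of the train-track/disk correspondence together with the final downstairs estimate. Unlike the forward inclusion supplied by Lemma~\ref{lem:disk_complex_embed}, for $n\ge 4$ not every curve on $F_{n-1}$ is a lift (Remark~\ref{remark:curve_complex_not_isomorphic}), so I must genuinely use that a meridian \emph{carried by} $\psi^{-1}(\tau)$ projects to a $\tau$-carried arc or curve and that a compressing disk for $H_\pm$ can be chosen invariant under the hyperelliptic involution so as to descend to a disk (or bridge disk) in $B_\pm$. The secondary obstacle is that, because the cited linear-growth theorem is stated for closed surfaces of genus $\ge 2$, I cannot simply quote it on the genus-$0$ surface $X_{2n}$; instead I must invoke directly the hyperbolicity and disk-set quasiconvexity of $\mathcal{AC}(X_{2n})$ (equivalently, establish that the lift is a quasi-isometric embedding carrying the downstairs disk sets coarsely onto the lifted upstairs disk sets) in order to convert non-genericity of $\beta$ into boundedness of the bridge distance.
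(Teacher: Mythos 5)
Your ``immediate'' direction is, in fact, the entirety of the paper's own proof: the paper disposes of this lemma in one line, ``This is a direct consequence of Proposition~\ref{introprop:knot_distance_bounded_below},'' and that proposition only yields the inequality $d(Y,H_m)\le d(\widehat{\beta^m},S)$, hence only the implication ``$\varphi$-distance increases $\Rightarrow$ plat distance increases.'' So on that half you coincide exactly with the paper, and you have correctly diagnosed that the converse does \emph{not} follow from the inequality --- the paper simply does not address it. It is worth noting that the converse is never actually used downstream: Lemma~\ref{lem:distanceatleastpower} and Theorem~\ref{introthm:infhypprimeknots} only need the upstairs-to-downstairs direction, so the biconditional is stronger than what the paper requires or proves.

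Your attempt to supply the converse is therefore a genuine addition rather than a reconstruction, but as written it has real gaps. First, you slide from ``$d(Y,H_m)$ bounded'' to ``the invariant train track of $\varphi$ carries a meridian'': the results you cite give non-genericity in the sense that the unstable lamination lies in the \emph{closure} of the disk set, which is weaker than a carried meridian, and the two should not be conflated without argument. Second, the descent step is the crux and is only asserted: for $n\ge 4$ the handlebody $H_{\pm}$ has compressing disks that are not isotopic to lifts of bridge disks or of disk-bounding curves on $X_{2n}$ (this is exactly the failure of surjectivity you flag via Remark~\ref{remark:curve_complex_not_isomorphic}), so you would need to prove that a meridian close to the lamination can be replaced by a \emph{symmetric} one that descends --- an equivariant surgery/innermost-disk argument that is nontrivial and not supplied by Lemma~\ref{lem:disk_complex_embed}, which only goes the other way. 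Third, the closing estimate downstairs presupposes Gromov hyperbolicity of $\mathcal{AC}(X_{2n})$ and quasiconvexity of the two bridge-disk sets there; these are true but are substantial imported theorems that you would need to cite precisely, since the linear-growth machinery of \cite{DistancesofHeegaardsplittings} is stated for closed splitting surfaces and cannot be quoted on $X_{2n}$. In short: your forward direction is correct and matches the paper; your converse identifies a gap the paper leaves open but does not yet close it.
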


\begin{proof}
    This is a direct consequence of Proposition \ref{introprop:knot_distance_bounded_below}.
\end{proof}

%%%%%
\begin{lemma}\label{lem:distanceatleastpower}
   Let $\beta$ be a generic pseudo-Anosov braid word.  Then $d(S^3, \widehat{\beta^m})$ increases at least linearly in $m$.
\end{lemma}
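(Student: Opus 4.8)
The plan is to push the problem up to the double branched cover, apply the linear-growth result for pseudo-Anosov Heegaard splittings from \cite{DistancesofHeegaardsplittings}, and then pull the resulting lower bound back down via Proposition~\ref{introprop:knot_distance_bounded_below}. First I would fix the bridge splitting $\widehat{\beta} = T_\beta \cup \overline{T_\e}$ with splitting sphere $X_{2n}$ and let $Y(\widehat{\beta}) = H_+ \cup_\varphi \overline{H_-}$ be the induced Heegaard splitting constructed in Section~\ref{subsec:Building_the_Double_Branched_Cover}, with splitting map $\varphi \colon F_{n-1} \ra F_{n-1}$. Since $n \geq 3$, the splitting surface $F_{n-1}$ has genus $n-1 \geq 2$, which is exactly the range in which the machinery of \cite{DistancesofHeegaardsplittings} applies (and the genus-$1$ degeneracy noted in the $n=2$ Remark is avoided).

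Next I would record the functoriality of the lift: the isotopy of $X_{2n}$ realizing the action of $\beta^m$ lifts to the isotopy of $F_{n-1}$ realizing $\varphi^m$, so the Heegaard splitting $H_m$ of $Y(\widehat{\beta^m})$ induced by $\beta^m$ has splitting map exactly $\varphi^m$. Together with Lemma~\ref{lem:beta_pA_phi_pA}, this shows $\varphi$ is pseudo-Anosov (with the same dilatation as $\beta$), and hence so is each $\varphi^m$.

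The substantive step is to convert the \emph{genericity} of $\beta$ into the hypothesis required by \cite{DistancesofHeegaardsplittings}. Here I would invoke the fact recorded after Lemma~\ref{lem:beta_pA_phi_pA} that the train track of $\beta$ on $X_{2n}$ lifts to the train track of $\varphi$ on $F_{n-1}$, combined with Lemma~\ref{lem:shadows_lift_disks} and Lemma~\ref{lem:disk_complex_embed}, which characterize precisely which lifted curves lie in $\mathcal{D}(F_{n-1}, H_\pm)$ as lifts of curves in $\mathcal{D}(X_{2n}, B_\pm)$. By definition of generic, no arc or curve in the train track of $\beta$ bounds a disk in $B_+$ or $B_-$; lifting, no curve in the train track of $\varphi$ bounds a disk in $H_+$ or $H_-$. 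This is exactly the statement that the unstable lamination of $\varphi$ does not converge to a meridian of either handlebody, which, as discussed in Section~\ref{subsubsec:Train-Tracks_Method_for_Standard_Pseudo-Anosov_Braids} following \cite{pAnotgenericref, genericpaheegaardsplittings}, is the criterion guaranteeing that $d(Y, H_m)$ grows at least linearly in $m$.

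Finally, applying \cite{DistancesofHeegaardsplittings} to $\varphi$ yields a linear lower bound on $d\bigl(Y(\widehat{\beta^m}), H_m\bigr)$, and Proposition~\ref{introprop:knot_distance_bounded_below} gives $d(S^3, \widehat{\beta^m}) \geq d\bigl(Y(\widehat{\beta^m}), H_m\bigr)$; chaining the two inequalities produces the desired linear lower bound on $d(S^3, \widehat{\beta^m})$. I expect the main obstacle to be the third step: rigorously matching the braid-level genericity condition (no disk-bounding arcs or curves in the train track on $X_{2n}$) with the splitting-level criterion of \cite{DistancesofHeegaardsplittings} (unstable lamination avoiding meridians of $H_\pm$). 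This requires the Birman--Hilden correspondence to respect the train-track structure under lifting and the disk complex $\mathcal{D}(F_{n-1}, H_\pm)$ to pull back exactly to $\mathcal{D}(X_{2n}, B_\pm)$, so that genericity is genuinely preserved rather than merely plausible.
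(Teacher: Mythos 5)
Your proposal is correct and follows essentially the same route as the paper: lift to the double branched cover, apply the linear-growth result of \cite{DistancesofHeegaardsplittings} to the generic pseudo-Anosov splitting map $\varphi^m$, and pull the bound back down via Proposition~\ref{introprop:knot_distance_bounded_below} (the paper routes this last step through Lemma~\ref{lem:hempel_distances_increasing_beta_and_phi}, which is itself a direct consequence of that proposition). Your version is considerably more detailed than the paper's two-sentence proof, and usefully isolates the step the paper leaves implicit, namely matching the braid-level genericity of the train track on $X_{2n}$ with the meridian-avoidance criterion for $\varphi$ on $F_{n-1}$.
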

\begin{proof}
     It was shown in \cite{DistancesofHeegaardsplittings} that the Hempel distance of a generic pseudo-Anosov Heegaard splitting grows linearly in the power of the map.  This result and Lemma~\ref{lem:hempel_distances_increasing_beta_and_phi} imply the statement of the lemma.  
\end{proof}

%%%%%%%%%%%%%%%%%%%%%%%%%%%%%%%%%%%%
\section{Generating a Sequence of Hyperbolic Knots}\label{sec:Generating_a_Sequence_of_Hyperbolic_Knots}
In order to generate an infinite family of hyperbolic knots, we want a pseudo-Anosov braid word $\beta$ and powers $m_1, m_2, ...$ such that $\widehat{\beta^{m_i}}$ has one component for each $m_i$.  Thus, we will choose a pseudo-Anosov braid $\beta\in B_{2n}$ such that $\widehat{\beta}$ is a knot. Then we will determine the order of the associated permutation $\pi_{\beta}$.  If the order of $\pi(\beta)$ in the symmetric group $S_{2n}$ is $k$ then $\widehat{\beta^{mk+1}}$ will have $1$-component for every $m \in \mathbb{Z}_{\geq 0}$.  By choosing $\beta$ such that $d(\widehat{\beta},S)\geq 3$, we see this sequence will provide infinitely many $n$--bridge hyperbolic knots, as the Hempel distance increases as $m$ increases by Lemma~\ref{lem:distanceatleastpower}.  It is left to show that these knots are prime and that infinitely many of them are distinct.  

\infhypprimeknots*

\begin{proof}
Take $\beta$ as above.  Let $\pi \colon B_{2n} \ra S_{2n}$ be the canonical projection from the braid group to the symmetric group, and let $k$ denote the order of $\pi(\beta)$ in $S_{2n}$.  Fix a splitting sphere $S$ for $\beta$ such that $S$ splits $\beta$ into $T_\beta$ and $T_\e$ as described in Section~\ref{subsec:Building_the_Double_Branched_Cover}.  The splitting sphere $S$ will remain fixed throughout this proof, and for powers of $\beta$, we will append at the end of $\beta$ (such that $S$ splits $\beta^m$ into $T_0$ and the lower half plat closure of $\beta^m$).  

Because $\beta$ is a generic pseudo-Anosov braid, by Lemma~\ref{lem:beta_pA_phi_pA} and Lemma~\ref{lem:hempel_distances_increasing_beta_and_phi} the lift of $\beta$ will be a generic pseudo-Anosov map of the surface $F_{n-1}$.  Thus, there is some $M$ such that for all $m\geq M,$ the Hempel distance of the Heegaard splitting obtained from the lifting $\beta^{mk+1}$ is larger than 3 by Lemma~\ref{lem:distanceatleastpower}. By Proposition~\ref{introprop:knot_distance_bounded_below},   $d(\widehat{\beta^{mk+1}},S)\geq 3.$  By Theorem~\ref{thm:distance_geq_one_no_destab}, $\widehat{\beta^{mk+1}}$ is hyperbolic.  We also know from Section~\ref{subsubsec:number_of_components} that $\widehat{\beta^{mk+1}}$ is a knot for all $m$. By Theorem~\ref{thm:composite_distance_one}, each $\widehat{\beta^{mk+1}}$ is a prime knot.

By Corollary~\ref{cor:knot_genus_Hempel_distance}, the knot genus $g(\widehat{\beta^{mk+1}})$ is bounded below by $\frac{1}{2}(d(\widehat{\beta^{mk+1}},S)-1).$  This provides a lower bound of the genus of the sequence of knots $\{\widehat{\beta^{mk+1}}\}_{m=M+1}^\infty$.  To exhibit an upper bound on the genus we use Seifert's algorithm on the knot diagram.  For any knot $K \in \{\widehat{\beta^{mk+1}}\}_{m=M+1}^\infty$, if $g(K)=G$, then there exists some $m$ such that $G < \frac{1}{2}(d(\widehat{\beta^{mk+1}},S)-1)$.  Therefore, $\{\widehat{\beta^{mk+1}}\}_{m=M+1}^\infty$ contains infinitely many distinct knots. 
\end{proof}
%%%%%
%\vspace{1em}
%\setcounter{section}{6}
%\setcounter{theorem}{1}

We note that the upper bound on genus given by Seifert's algorithm is a poor bound; experimental data supports that each $\beta^{mk+1}$ may in fact be unique.  See Example~\ref{example:3-bridgeconcreteexample} and Conjecture~\ref{conj:entropy_genus_distance_volume}.  
%
%
%ADD RESULTS ABOUT VOLUME???
 % 
%
%We now present two examples of families of high-crossing hyperbolic knots as in Theorem~\ref{introthm:infhypprimeknots}. 

%\begin{corollary}
%    The genus of the plat closure of a generic pseudo-Anosov braid $\beta^n$ is bounded below by a linear function in $n.$
%\end{corollary}
%\begin{proof}
%    Since the Hempel distance of the lift of $\beta$ is lower bound for the distance of $\beta$ and by Theorem 1.1 in \cite{DistancesofHeegaardsplittings}  we know that the Hempel distance of the lift of $\beta$ grows linearly with $n.$
%\end{proof}

%%%%%
\begin{example}\label{example:3-bridgeconcreteexample}
         Consider the braid on 6 strands $\beta=\sigma_2^2\sigma_4\sigma_1\sigma_3\sigma_5\sigma_2.$  Let $S=X_{2n}$, our splitting sphere, be positioned as in Section~\ref{subsec:Building_the_Double_Branched_Cover}.  This braid is pseudo-Anosov, thus its powers are also pseudo-Anosov.  To check that $\beta$ is a \emph{generic} pseudo-Anosov braid, we compute the train-track associated to $\beta.$  We will use Toby Hall's implementation of the Bestvina-Handel algorithm. See \cite{MR1308491} and \cite{TrainsAlg}. 
         %%%%%
        \begin{figure}[ht]
            \centering
            \includegraphics[scale=0.5]{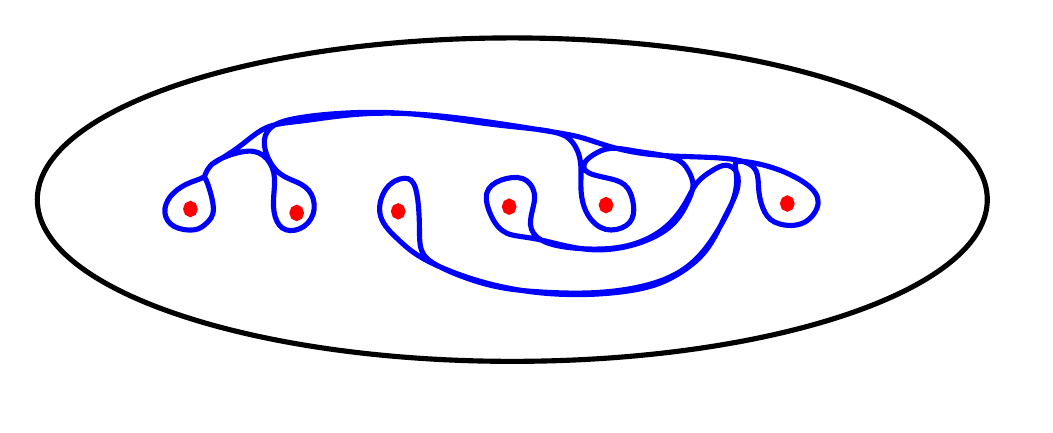}
            \caption{The train-track of $\beta$ on punctured disk}
            \label{fig:traintrackforyourexample}
        \end{figure}
        %%%%%
            From Lemma~\ref{lem:beta_pA_phi_pA}, the lift of $\beta$ gives a pseudo-Anosov gluing map.  Therefore, taking the double branched cover over $X_{2n}$ gives a genus 2 surface with the following train-track corresponding to the lift of $\beta$.
        %%%%%
        \begin{figure}[ht]
                 \centering
                 \includegraphics[scale=0.5]{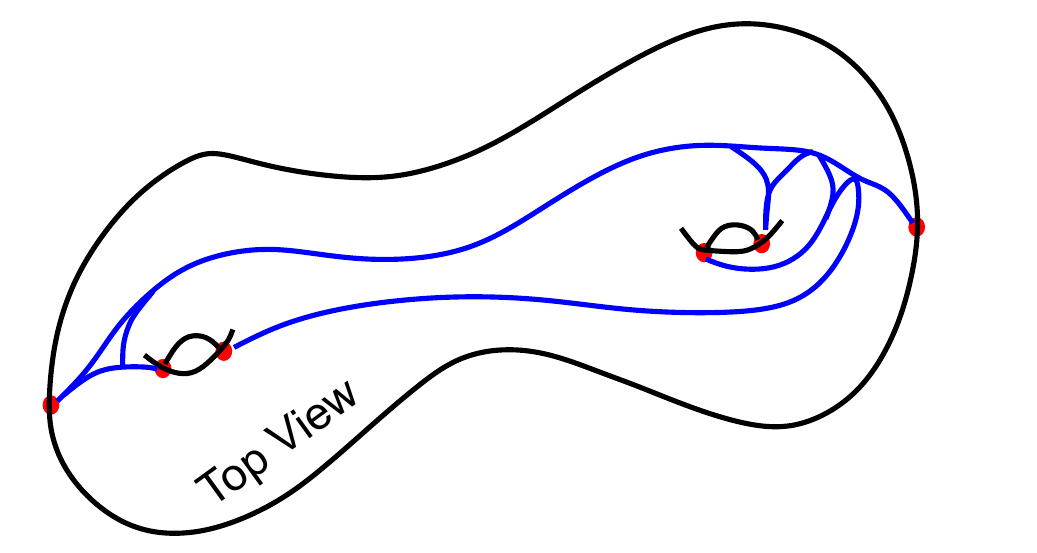}
                 \caption{Train-track of the lift of $\beta.$ Only the top part of the train track is shown; flipping the surface over would show the same image on the bottom side.}
                 \label{fig:traintrackonlift}
        \end{figure}
        %%%%%     
        There is no subgraph of the track on the disk that is an arc connecting two curves surrounding punctures.  The part of the track joining the curves around punctures 1 and 2 has a valence 3 point on it, as does the part of the track joining curves around punctures 3 and 4.  The lift of this train track then does not contain a subgraph that is a meridional curve. Thus, the lift of $\beta$ and therefore $\beta$ are generic pseudo-Anosov elements.
              
        The associated permutation of $\beta$ is $\pi_{\beta}=[2,5,1,3,6,4]$.  We see by constructing the plat closure graph (or by drawing the plat itself, as in Figure~\ref{fig:pabeta}) that $\widehat{\beta}$ is a knot. 
        The order of the permutation $\pi_{\beta}$ is 6.  Therefore, $\{\widehat{\beta^7}, ...\}$ are knots. 
        Additionally, by constructing the plat closure graph for $\beta$, we see that $\widehat{\beta^5}$ also results in a knot.  Therefore, taking powers of $\beta$ of the form $6m\pm 1$ will always give a knot.

         %%%%%
            \begin{figure}[ht]
            \centering
            \begin{tikzpicture}[
                roundnode/.style={circle, draw=green!60, fill=green!5, very thick, minimum size=7mm}]
               \pic[braid/.cd,
             number of strands=6,
            line width=2pt,
            name prefix=braid, 
            height=.25in,
            width=0.25in,
            rotate=180,
            style=thin] at (0,0) {braid={s_4^{-1} s_4^{-1} s_2^{-1} s_5^{-1} s_3^{-1} s_1^{-1} s_4^{-1}}};
            % place nodes
            \node[fill=red,circle,inner sep=0pt,minimum size=0pt] at (-1.9, 0)  (t1) {};
            \node[fill=red,circle,inner sep=0pt,minimum size=0pt] at (-1.3, 0)  (t2){};
            \node[fill=red,circle,inner sep=0pt,minimum size=0pt] at (-.6, 0)  (t3){};
            \node[fill=red,circle,inner sep=0pt,minimum size=0pt] at (0, 0)  (t4){};
            \node[fill=red,circle,inner sep=0pt,minimum size=0pt] at (-3.2, 0)  (t5){};
            \node[fill=red,circle,inner sep=0pt,minimum size=0pt] at (-2.5, 0)  (t6){};
            \node[fill=red,circle,inner sep=0pt,minimum size=0pt] at (-1.9, -4.95)  (b1){};
            \node[fill=red,circle,inner sep=0pt,minimum size=0pt] at (-1.3, -4.95)  (b2){};
            \node[fill=red,circle,inner sep=0pt,minimum size=0pt] at (-.6, -4.95)  (b3){};
            \node[fill=red,circle,inner sep=0pt,minimum size=0pt] at (0, -4.95)  (b4){};
            \node[fill=red,circle,inner sep=0pt,minimum size=0pt] at (-3.2, -4.95)  (b5){};
            \node[fill=red,circle,inner sep=0pt,minimum size=0pt] at (-2.5, -4.95)  (b6){};
            % draw edges 
            \draw[black, bend left]  (t1) to node [auto] {} (t2);
            \draw[black, bend left]  (t5) to node [auto] {} (t6);
             \draw[black, bend left]  (t3) to node [auto] {} (t4);
             \draw[black, bend right]  (b1) to node [auto] {} (b2);
             \draw[black, bend right]  (b3) to node [auto] {} (b4);
             \draw[black, bend right]  (b5) to node [auto] {} (b6);
            \end{tikzpicture}
            \caption{$\beta$ in plat position}
            \label{fig:pabeta}
        \end{figure}
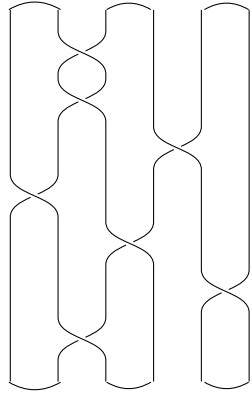
        %%%%
        
        Therefore, taking the plat closure of powers of $\beta$ and applying Theorem~\ref{introthm:infhypprimeknots} generates an infinite sequence of prime hyperbolic knots, infinitely many of which are distinct. In Table~\ref{table:first_example} we provide a table showing the relationship between powers of $\beta$, the entropy, the knot genus, and the hyperbolic volume.  This data was calculated in Sage and SnapPy. 
\begin{table}[ht] 
\caption{The entropy, genus, and volume for $\widehat{\beta^n}$ when $\beta=\sigma_2^2\sigma_4\sigma_1\sigma_3\sigma_5\sigma_2$}
\centering
\begin{tabular}{lllll}
 $n$& Entropy & Genus&  Hyperbolic volume \\ \hline
 1& 1.4860 & 0  & n/a   \\
 5& 7.43 & 4  &  16.075  \\
 7& 10.402 & 6  & 38.5061  \\
 11& 16.346 & n/a  &  76.12916  \\
 13& 19.318 & n/a  & 94.8582   \\
 17& 25.262 & n/a  & 132.4251  \\
 19& 28.234 & n/a  & 151.20236   \\
 23& 34.178 & n/a  & 188.761   \\
 25& 37.15 & n/a  & 204.3726  \\
\end{tabular}
\label{table:first_example}
\end{table}

\end{example}
%%%%%

\subsection{Relationship to Highly Twisted Links}\label{subsec:highly_twisted}

In \cite{bridgedistanceplat}, Johnson and Moriah consider consider plats which have the form shown in Figure~\ref{fig:highlytwisted} below, which we will refer to as \emph{fishnet plats}. Each of the boxes is called a \emph{twist box} and is denoted by $a_{i,j}$ for $1\leq i\leq n-1$ and $1\leq j\leq m.$  The integer $n$ is referred to as the \emph{height} of the plat, and $m$ is called the \emph{width} (in our work, $m$ is the bridge number of the plat).  Each twist box is assigned an integer that denotes how many half twists occur in the box, and whether they are right-handed (positive) or left-handed (negative).  Such a diagram is \emph{highly twisted} if $|a_{i,j}| \geq 3$ for every $a_{i,j}$.

\begin{figure}[ht]
    \centering
    \includegraphics[height=6cm]{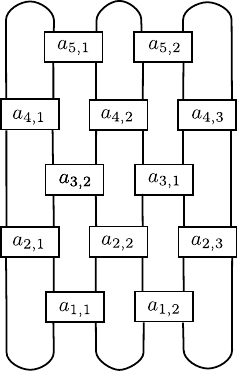} \hspace{2cm} \includegraphics[height=6cm]{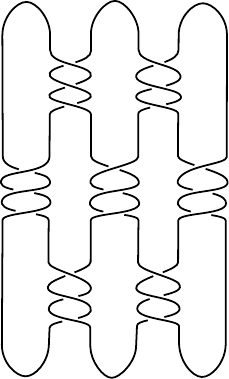}
    \caption{On the left is a general diagram for a height 6 width 3 fishnet plat, as described in \cite{bridgedistanceplat}.  On the right is an example of a highly twisted plat with $a_{1,j}=a_{3,j}=3$ for $j=1,2$ and $a_{2,j}=-3$ for $j=1,2,3$.}
    \label{fig:highlytwisted}
\end{figure}

While every link admits a presentation as a fishnet plat, not every link admits a presentation as a highly twisted plat.  For highly twisted plats, Johnson and Moriah prove the following: 

\begin{theorem}[Theorem 1.2 of \cite{bridgedistanceplat}]\label{theorem:bridgedistanceplats}
If $K\subseteq S^3$ is an $m$--bridge link with a highly twisted $n$--row $2m$--plat projection, with $m\geq 3,$ then the distance of the Hempel distance of the plat is exactly $\left\lceil \frac{n}{2(m-2)}\right\rceil.$
\end{theorem}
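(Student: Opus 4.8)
The plan is to prove the theorem by establishing matching upper and lower bounds for the distance in the arc-and-curve complex. Write $X_{2m}$ for the $2m$-punctured bridge sphere, and let $\mathcal{D}_+=\mathcal{D}(X_{2m},B_+)$ and $\mathcal{D}_-=\mathcal{D}(X_{2m},B_-)$ be the two disk complexes, so that the Hempel distance of the plat is $d(\mathcal{D}_+,\mathcal{D}_-)$ measured in $\mathcal{AC}(X_{2m})$. The first step is to exploit the product-like structure of the fishnet diagram: the $n$ rows of twist boxes determine a natural ``height'' function on $X_{2m}$, the $m$ columns give a fixed family of vertical reference arcs, and each twist box $a_{i,j}$ carries an annular subsurface $A_{i,j}$ (a neighborhood of the two strands being twisted). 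All of the combinatorics will be organized relative to this scaffolding.

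For the upper bound I would exhibit an explicit geodesic. Starting from a curve bounding a disk in the lower tangle, I would produce a sequence of essential curves $c_0,c_1,\dots,c_d$ in which consecutive curves are disjoint (hence adjacent in $\mathcal{AC}(X_{2m})$) and $c_d$ bounds a disk in the upper tangle. The key counting step is that a single essential curve can be isotoped to avoid the twisting across roughly $2(m-2)$ consecutive rows before it is forced to cross some twist box essentially: heuristically, the two outermost columns provide no turning room (hence $m-2$), and the factor $2$ records sweeping a curve up one side of the diagram and back down the other. Partitioning the $n$ rows into blocks of this size and rounding up then yields a path of length $\left\lceil n/(2(m-2))\right\rceil$, giving $d\leq\left\lceil n/(2(m-2))\right\rceil$.

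The lower bound will be the main obstacle, and this is where the highly twisted hypothesis $|a_{i,j}|\geq 3$ is indispensable. Here I would use subsurface projections together with the bounded geodesic image theorem of Masur and Minsky \cite{MM1999}: the condition $|a_{i,j}|\geq 3$ guarantees that the projections of $\mathcal{D}_+$ and $\mathcal{D}_-$ to each annular subsurface $A_{i,j}$ differ by at least the number of half-twists (up to a universal additive error), so each twist box is ``large.'' If there were a path in $\mathcal{AC}(X_{2m})$ shorter than $\left\lceil n/(2(m-2))\right\rceil$, then a careful block decomposition would force some vertex of that path to be disjoint from an entire block of more than $2(m-2)$ rows of highly twisted boxes; the bounded geodesic image theorem then bounds that vertex's projection into each of those annuli, which contradicts the accumulated twisting once the block is large enough to beat the universal constant. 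The delicate part is the bookkeeping—choosing the blocks so that every short path provably ``misses'' a full block, and summing the annular estimates against the additive error—and it is precisely this accounting that pins down the constant $2(m-2)$ and the restriction $m\geq 3$ (so that $m-2\geq 1$).

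Combining the two bounds gives $d(\mathcal{D}_+,\mathcal{D}_-)=\left\lceil n/(2(m-2))\right\rceil$, as claimed. I would expect the annular-projection lower bound to be by far the hardest ingredient; an alternative, more hands-on route would replace the bounded geodesic image machinery with a direct train-track or intersection-number argument showing that each elementary move along a path in $\mathcal{AC}(X_{2m})$ can only ``unwind'' a bounded number of rows, but the subsurface-projection formulation is the cleanest way to make the highly twisted hypothesis do its work.
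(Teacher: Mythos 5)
First, a point of orientation: this statement appears in the paper only as a quotation (it is labelled Theorem 1.2 of \cite{bridgedistanceplat}) and the manuscript gives no proof of it, so there is no internal argument to measure your proposal against; the relevant comparison is with Johnson and Moriah's own proof. Your upper bound is essentially theirs: one writes down an explicit path of essential curves on the bridge sphere, each obtained from the previous one by sweeping across a block of $2(m-2)$ rows, and checks adjacency in $\mathcal{AC}(X_{2m})$. That part of the sketch is plausible, though the heuristic about ``the two outermost columns provide no turning room'' would have to be replaced by an actual construction before it counts as an argument.

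The lower bound is where your route diverges from the source and where there is a genuine gap. The bounded geodesic image theorem, applied to an annulus $A_{i,j}$ with $d_{A_{i,j}}(\mathcal{D}_+,\mathcal{D}_-)$ large, yields one vertex of any geodesic that fails to cut $A_{i,j}$ essentially. But a single vertex can simultaneously miss every twist region in a large block of rows, so counting annuli with large projection does not by itself force the path to have length $\left\lceil n/(2(m-2))\right\rceil$; the constant $2(m-2)$ only emerges if you can prove that no single essential curve on $X_{2m}$ can avoid (or have uniformly bounded projection to) the twist regions of more than $2(m-2)$ consecutive rows. That statement is the real content of the theorem, and neither BGIT nor the hypothesis $|a_{i,j}|\geq 3$ delivers it on its own --- it is exactly the ``delicate bookkeeping'' you defer. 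Johnson and Moriah obtain it by the route you mention only as an aside: they construct a nested sequence of train tracks adapted to the rows of the fishnet diagram, use the highly twisted condition to show that any curve disjoint from a curve carried with positive weights by $\tau_i$ must itself be carried by $\tau_{i+1}$ (where passing from $\tau_i$ to $\tau_{i+1}$ absorbs $2(m-2)$ further rows), and note that boundaries of upper compressing disks are not carried until all $n$ rows have been absorbed. So the ``alternative, more hands-on'' train-track argument is in fact the proof, and the subsurface-projection framing, as you have set it up, does not close without importing essentially that same carrying lemma.
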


\noindent In \cite{HighlyTwisted}, Lazarovich, Moriah, and Pinksy expand upon this result to knots and links which are not necessarily in plat position, showing:

\begin{theorem}[Theorem A of \cite{HighlyTwisted}]\label{thm:highly_twisted_A}
    Let $D(\mathcal{L})$ be a connected, prime, twist-reduced, $3$-highly twisted diagram of a link $\mathcal{L}$ with at least two twist regions, then $\mathcal{L}$ is hyperbolic. %this means at least 2 twist regions AND every twist region is 3-twisted or more
\end{theorem}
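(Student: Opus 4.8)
The plan is to prove hyperbolicity via Thurston's hyperbolization theorem for link complements, which reduces the statement to showing that $S^3 \setminus \mathcal{L}$ is irreducible, atoroidal, and anannular. Rather than attack these conditions directly on $D(\mathcal{L})$, I would first pass to an associated \emph{augmented link} $\mathcal{L}^{+}$: for each of the $k \geq 2$ twist regions, insert an unknotted crossing circle encircling the two strands of that region, and then absorb full twists so that each region retains at most one crossing. The original link $\mathcal{L}$ is recovered from $\mathcal{L}^{+}$ by performing $-1/n_i$ Dehn filling on the $i$-th crossing circle, where $n_i$ records (up to a half-twist correction) the number of crossings of the $i$-th region; by hypothesis each $|n_i| \geq 3$.

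The first main step is to establish that $S^3 \setminus \mathcal{L}^{+}$ is hyperbolic. This is where the diagram hypotheses enter: connectedness and primeness of $D(\mathcal{L})$, together with the twist-reduced condition, guarantee that $\mathcal{L}^{+}$ is non-split, prime, and not a torus link, so that (following the augmented-link decomposition of Agol--Thurston and Futer--Purcell) its complement admits a complete hyperbolic structure and decomposes into right-angled ideal polyhedra. In this structure the disk bounded by each crossing circle caps off to a totally geodesic thrice-punctured sphere (by Adams), which pins down the geometry of the crossing-circle cusps and gives explicit control over their cusp cross-sections.

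The second main step is to push the hyperbolic structure through the Dehn fillings. Using the totally geodesic thrice-punctured spheres I would estimate the length of the filling slope $-1/n_i$ on the $i$-th crossing-circle cusp and show that it grows with $|n_i|$; the slope-length estimates of Futer--Purcell give a lower bound that is essentially linear in $|n_i|$. One then invokes the $6$-theorem (Agol, Lackenby): once every filling slope has length exceeding $6$, the filled manifold $S^3 \setminus \mathcal{L}$ is again hyperbolic, and a standard argument shows the filled cusps bound no essential tori or annuli. The ``at least two twist regions'' hypothesis is exactly what excludes the $(2,n)$ torus-link case (a single twist region), while primeness rules out connected sums, so that the filled manifold is a genuine hyperbolic link complement.

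The hard part will be the threshold: the bare $6$-theorem typically demands more than three crossings per region to force slope length above $6$, so obtaining the sharp bound $|n_i| \geq 3$ requires a finer analysis than the generic estimate. I would address this by working directly with the explicit right-angled polyhedral decomposition rather than the general inequality, computing the actual cusp shape of each crossing circle from the totally geodesic thrice-punctured spheres and verifying by hand that the $-1/n$ slope already exceeds length $6$ when $|n| \geq 3$, disposing of the borderline fillings using the combinatorics of the twist-reduced diagram. An alternative route, bypassing the $6$-theorem entirely, would be a Menasco-style normal-surface argument: place $\mathcal{L}$ with respect to the projection sphere and show that any essential sphere, torus, or annulus meets each twist region in a controlled way, with the $3$-highly-twisted hypothesis forcing such a surface either to compress or to be boundary-parallel.
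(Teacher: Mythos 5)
First, a framing point: the paper does not prove this statement at all. Theorem~\ref{thm:highly_twisted_A} is quoted verbatim as Theorem A of \cite{HighlyTwisted} and used as a black box, so there is no internal proof to compare against; your proposal has to be judged as an attempt at the Lazarovich--Moriah--Pinsky theorem itself.

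The route you actually develop --- augment each twist region with a crossing circle, establish hyperbolicity of the augmented link via the Agol--Thurston right-angled polyhedral decomposition and Adams' totally geodesic thrice-punctured spheres, then recover $\mathcal{L}$ by $-1/n_i$ Dehn filling controlled by the $6$-theorem --- is precisely the Futer--Purcell argument, and that argument is known to yield the threshold ``at least $6$ crossings per twist region,'' not $3$. The gap is concentrated in the step where you propose to ``verify by hand that the $-1/n$ slope already exceeds length $6$ when $|n| \ge 3$.'' This is false in general: the cusp cross-section of a crossing circle is pinched between the two thrice-punctured spheres, and the resulting normalized length of the $-1/n$ slope grows essentially linearly in $|n|$ with a constant that does not push the cases $|n| = 3, 4, 5$ past the $6$-theorem threshold. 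That is exactly why the Futer--Purcell hypothesis is six crossings per region and why lowering it to three required a different argument; recomputing the explicit right-angled polyhedra does not rescue the borderline fillings, because the hypothesis of the $6$-theorem simply fails there.

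Your one-sentence fallback --- a Menasco-style analysis of how essential spheres, tori, and annuli meet the twist regions, with the $3$-highly-twisted, prime, and twist-reduced hypotheses forcing any such surface to compress or be boundary-parallel, followed by Thurston's hyperbolization for link complements --- is the viable route, and to the best of my knowledge it is essentially the strategy of \cite{HighlyTwisted}, which works directly with essential surfaces relative to the diagram rather than through Dehn filling length estimates. But as written that alternative carries none of the content: the normal position of surfaces with respect to the twist boxes, the intersection and Euler characteristic counts, and the precise use of twist-reducedness and primeness to exclude degenerate configurations are all left unsupplied. So the proposal does not constitute a proof: the argument you develop in detail cannot reach the stated bound of $3$, and the argument that can is only named, not made.
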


%(This is an improvement on "Links with exceptional surgeries," maybe mention that)

There is a deep connection between hyperbolic knots and knots which admit a highly-twisted diagram.  However, when in plat position, the restriction to highly-twisted braid words is very strict; braid words which are highly twisted are of a very specific form.  %WHY IS OUR CONSTRUCTION USEFUL?  MORE FREEDOM ON CHOICE OF BRAID WORD 
In our Example~\ref{example:3-bridgeconcreteexample}, $\beta$ is \emph{not} a highly-twisted diagram, nor are powers of $\beta$.  However, this does not preclude the possibility that these knots do admit some highly-twisted diagram.

\begin{example}\label{example:highlytwisted}

%ADD 2 -2 TO THIS EXAMPLE AS WELL (after checking train track)

While our construction does not require highly-twisted braid words, it does interact very nicely with them.  For example, we consider the braid word: $\beta=\sigma_2^3\sigma_4^3\sigma_1^{-3}\sigma_3^{-3}\sigma_5^{-3}\sigma_2^3\sigma_4^3$ as in Figure~\ref{fig:highlytwisted}.   %We can see a priori that this braid is pseudo-Anosov.  
First, we show this braid word is pseudo-Anosov by considering its lift to $Y(\widehat{\beta})$.  Lifting $\sigma_2$ and $\sigma_4$ to their corresponding action on the genus 2 splitting surface for $Y(\widehat{\beta})$, we see that they correspond to a multi-twist along the multicurve containing the two longitudinal curves.  Similarly, the letters $\sigma_1,\sigma_3,\sigma_5$ lift to a multi-twist along the multicurve containing the two meridional curves and a curve that intersects each longitude exactly once.  This is illustrated in Figure~\ref{fig:humpriesgenerators}.  Since the union of the two multicurves fill the surface, it follows from work of Penner \cite{PennerPAconstruction} that taking positive powers of twists along one multicurve and negative powers of twists along the other multicurve results in a pseudo-Anosov map on the genus 2 surface.  Subsequently $\beta$ is a pseudo-Anosov map on the punctured sphere since $\beta$ and its lift have the same local properties, i.e. the same stable and unstable foliations in a small neighborhood.  (For more on Penner's construction, see Theorem 14.4 of \cite{primeronmappingclassgroups}.)  Therefore, $\beta$ is a pseudo-Anosov braid word. 

Further, we see that $\beta$ must be generic in the sense of Section~\ref{subsubsec:Train-Tracks_Method_for_Standard_Pseudo-Anosov_Braids} because its plat closure is highly twisted; all subsequent powers of $\beta$ will result in highly twisted plats, and by Theorem~\ref{theorem:bridgedistanceplats} the Hempel distance of these powers must then increase linearly.

\begin{figure}[ht]
    \centering
    \includegraphics[width=.4\linewidth]{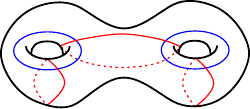} 
    \caption{The multicurve corresponding to lifts of $\sigma_2$ and $\sigma_4$ in blue. 
 The multicurve corresponding to lifts of $\sigma_1,\sigma_3,\sigma_5$ in red.}
    \label{fig:humpriesgenerators}
\end{figure}

We chose our braid $\beta$ specifically so that it would be both highly twisted and fit the hypotheses of Theorem~\ref{introthm:infhypprimeknots}.  The associated permutation of $\beta$ is $\pi(\beta)=[3,5,1,6,2,4].$  We see by constructing the plat closure graph that $\beta$ is a knot.  The order of the permutation $\pi(\beta)$ is 2.  Therefore, $\{\widehat{\beta^{2k+1}}\vert k=0,1,2,\dots \}$ is a collection of knots.  Using Theorem~\ref{theorem:bridgedistanceplats} we can calculate explicitly the Hempel distances of these plats:
\begin{align*}
    d(\widehat{\beta^{2k+1}})&=\left\lceil \frac{n}{(2(m-2)}\right\rceil  \text{\hspace{1em} for all $k,$ $m=3$ }\\
    &=\left\lceil \frac{n}{2}\right\rceil  \text{\hspace{1em} for each $k,$ $n=4(k+1)$ }\\
    &=\left\lceil \frac{4(k+1)}{2}\right\rceil = 2(k+1) \\
\end{align*}
This demonstrates that the Hempel distance grows linearly with powers of $\beta$ as stated in Lemma~\ref{lem:distanceatleastpower}.
In Table~\ref{table:second_example} we provide a table showing the first few knotted powers of $\beta,$ the entropy, the Hempel distance, and hyperbolic volume. Note that we were unable to calculate the knot genus for powers of this knot due to computational limitations. This data was calculated in Sage and SnapPy.
\begin{table}[ht]
\caption{The entropy, genus, and volume for $\widehat{\beta^n}$ when $\beta=\sigma_2^3\sigma_4^3\sigma_1^{-3}\sigma_3^{-3}\sigma_5^{-3}\sigma_2^3\sigma_4^3$}
\centering
\begin{tabular}{lllll}
 $k$& Entropy & Hempel distance &  Hyperbolic volume \\ \hline
 0& 4.02503 & 2  & 28.75 \\
 1& 12.07510 & 4  & 90.49  \\
 2& 20.12515 & 6  & 152.20 \\
 3& 28.17521 & 8   & 213.92 \\
 4& 36.2257 & 10  & 275.63 \\
 5& 44.27533 & 12  & 337.35 \\
 6& 52.32539 & 14  & 399.06  \\
 7& 60.37545 & 16  & 460.78  \\
 8& 68.42551 & 18  & 522.49  \\
\end{tabular}
\label{table:second_example}
\end{table}
\end{example}
%%%%%

%%%%%
\begin{example}\label{example:3_-1}
   By the same argument as in Example~\ref{example:highlytwisted}, the braid word $\beta=\sigma_2^3\sigma_4^3\sigma_1^{-1}\sigma_3^{-1}\sigma_5^{-1}\sigma_2^3\sigma_4^3$ is a pseudo-Anosov braid.  By lifting its train-track, we see that it is generic in the sense of Section~\ref{subsubsec:Train-Tracks_Method_for_Standard_Pseudo-Anosov_Braids}.  The permutation $\pi_\beta = [3, 5, 1, 6, 2, 4]$ is as in Example~\ref{example:highlytwisted}, so $\{\widehat{\beta^{2k+1}}\}$ is an infinite family of prime hyperbolic knots as in Theorem~\ref{introthm:infhypprimeknots}.  However, $\widehat{\beta^{2k+1}}$ are not highly twisted plats, as so we cannot explicitly calculate the Hempel distance.  
\end{example}

%%%%%
\begin{example}\label{example:why_n_geq_3}
    Now we present a non-example which illustrates why the construction above relies on bridge index $n\geq 3$.  The results about the Hempel distance for knots rely on $n \geq 3$ and the results about Hempel distance for Heegaard splittings rely on $g \geq 2$ largely because the curve complexes for $X_{2n}$, $n \leq 2$ and $F_g$, $g \leq 1$ are very different than those for $n \geq, g \geq 2$ \cite{primeronmappingclassgroups}.  
    In this example, we see an pseudo-Anosov braid $\beta \in B_4$ where $\widehat{\beta^m}$ are in fact of the same knot type.

    Consider the braid word $\beta=\sigma_2\sigma_1^{-1}\sigma_2^2\sigma_3\in B_4.$  This is a pseudo-Anosov braid whose plat closure is the figure--eight knot.  The corresponding permutation $\pi_{\beta}=[3,1,4,2]$ has order $4$. This means taking the plat closure of $\beta^{2m-1}$ yields a knot.  On the other hand, by directly checking the knot type for the first few $m$ shows something strange: the knot type does not change! By a simple inductive argument, we show that the knot type of powers $\beta^{2m-1}$ will always result in a figure--eight knot.
    \vspace{1em}\\
    For $m=1,2,3$ by direct calculation, we have that the plat closure of $\beta^{2m-1}$ is the figure--eight knot.  Suppose now that the plat closure of $\beta^{2m-1}$ for an arbitrary $m$ is the figure--eight knot.  Consider the plat closure of $\beta^{2(m+1)-1}=\beta^{2m-1+2}.$  By assumption, we have that the plat closure of $\beta^{2m-1}$ is the figure--eight knot, which is the same as the plat closure of $\beta.$ %{\bf So that replacing the braid word $\beta^{2k-1}$ with $\beta$ will not change the resulting knot type. (Is this true?? )} 
    So that the knot type of $\beta^{2m-1+2}$ is the same as $\beta^{3}$, which is already known to be the figure--eight.
\end{example}
%%%%%

%%%%%%%%%%%%%%%%%%%%%%%%%%%%%%%%%%%%
\section{Questions and Further Work}\label{sec:Questions and Further Work}

We end with several open questions and conjectures. As stated above, it seems likely that the Seifert genus is a poor upper bound for the genus of the knots in $\{\widehat{\beta^{mk+1}}\}_{m=M+1}^\infty$.  Indeed, if $F$ is a Seifert surface for $\widehat{\beta}$, one can obtain a spanning surface for $\widehat{\beta^2}$ by attaching $n$ untwisted bands to the surface from the upper bridges of one copy of $\beta$ to the lower bridges of the other.  However, there is no guarantee that this operation produces an orientable surface.  If one attempts to rectify this by adding half twists to the bands, then the boundary is no longer $\widehat{\beta^2}$.

Recall that the \emph{entropy} of a pseudo-Anosov braid is defined as the log of the stretch factor of the map.  See \cite{BraidsandDynamics} for more discussion.  The hyperbolic volume of a knot is linearly related to its genus \cite{brittenham1998boundingcanonicalgenusbounds}, which is linearly related to Hempel distance by Corollary~\ref{cor:knot_genus_Hempel_distance}.  The Hempel distance is linearly related to the power of the map, the log of the stretch factor (entropy) then is linearly related to the power of the map. Therefore, we conjecture these are all linearly related to the power of the pseudo-Anosov braid.

%%%%%
\begin{conjecture}\label{conj:entropy_genus_distance_volume}
    Let $\beta\in B_{2n}$ with $n\geq 3$ be a genetic pseudo-Anosov braid.  Then the entropy of braids in the sequence $\{\beta^{mk+1}\}_{m=M+1}^\infty$, a lower bound on the knot genus of $\{\widehat{\beta^{mk+1}}\}_{m=M+1}^\infty$, the Hempel distances of $\{(\widehat{\beta^{mk+1}},S)\}_{m=M+1}^\infty$, and the hyperbolic volumes of the $\{\widehat{\beta^{mk+1}}\}_{m=M+1}^\infty$ are all linearly related by a function in $m$.
\end{conjecture}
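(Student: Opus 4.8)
The plan is to treat the four quantities separately, show that each is (at least coarsely) an affine function of $m$, after which the mutual linear relations are automatic. The entropy is the one genuinely easy piece: dilatation is multiplicative under iteration, so if $\beta$ has stretch factor $\lambda$ then $\beta^{mk+1}$ has stretch factor $\lambda^{mk+1}$, and its entropy equals $(mk+1)\log\lambda$, which is exactly affine in $m$ with slope $k\log\lambda$. This is already the pattern visible in Table~\ref{table:first_example} and Table~\ref{table:second_example}, and I would dispatch it first so as to reduce the conjecture to the remaining three quantities.

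For the Hempel distance I would start from the lower bound already in hand: Lemma~\ref{lem:distanceatleastpower} gives $d(S^3,\widehat{\beta^{mk+1}}) \geq c_1(mk+1)$ for large $m$. The missing ingredient is a matching upper bound, and here I would invoke the curve-complex geometry behind \cite{DistancesofHeegaardsplittings}. The bridge distance is the distance in $\mathcal{AC}(X_{2n})$ between $\mathcal{D}(X_{2n},B_-)$ and $\beta^{mk+1}\cdot\mathcal{D}(X_{2n},B_+)$; since the disk sets are quasiconvex and $\beta$ acts as a loxodromic isometry of positive stable translation length $\ell$, the distance between a quasiconvex set and its $(mk+1)$-fold translate obeys $d = (mk+1)\ell + O(1)$ with additive error bounded independently of $m$. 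This already yields coarse linearity. Upgrading the $O(1)$ to an exact affine law, as happens in the highly-twisted case via Theorem~\ref{theorem:bridgedistanceplats}, is the first serious obstacle.

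For the genus I would sandwich. The lower bound $g(\widehat{\beta^{mk+1}}) \geq \tfrac12\bigl(d(\widehat{\beta^{mk+1}},S)-1\bigr)$ from Corollary~\ref{cor:knot_genus_Hempel_distance} is affine in $m$ once the Hempel distance is. For an upper bound I would run Seifert's algorithm on the plat diagram of $\beta^{mk+1}$, whose crossing number grows linearly in $m$; this produces a spanning surface of genus bounded above by an affine function of $m$, pinning the genus between two lines. The hyperbolic volume is then handled by the two-sided linear comparison between volume and genus of \cite{brittenham1998boundingcanonicalgenusbounds}, which transports affine control on genus to affine control on volume.

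The main obstacle is that the lower bounds on all four quantities fall out of the construction, but closing the gap to an \emph{exact} affine law requires matching upper bounds on Hempel distance and on genus \emph{with the correct slopes}. The highly-twisted examples succeed precisely because Theorem~\ref{theorem:bridgedistanceplats} supplies a sharp distance formula; in the general generic case one must control the bounded error in the curve-complex estimate and, harder still, compute the Seifert genus exactly, a genus-detection problem presumably requiring taut foliations or sutured Floer homology, so that the genus lower and upper bounds share a slope. I expect the exact genus computation and the sharp Hempel upper bound to be the crux, and suspect that the clean linearity observed numerically reflects a hidden affine structure, such as eventual stabilization of the subsurface projections determined by $\beta^{mk+1}$, that a complete proof would have to make explicit.
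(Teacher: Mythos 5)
You should first be aware that the paper does not prove this statement: it is stated as Conjecture~\ref{conj:entropy_genus_distance_volume} and is supported only by the heuristic chain of one-sided linear bounds assembled in Sections 2--4 and by the numerical data in Tables~\ref{table:first_example} and~\ref{table:second_example}. There is therefore no proof in the paper to compare yours against, and a complete argument would be new mathematics rather than a reconstruction.

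Your proposal is a sensible plan, but it is not a proof, and the gaps are genuine rather than merely technical. The entropy computation is the only complete piece: $h(\beta^{mk+1})=(mk+1)\log\lambda$ is exactly affine in $m$. Everything else delivers only one-sided or coarse control. For the Hempel distance, Lemma~\ref{lem:distanceatleastpower} gives a linear lower bound, but your proposed upper bound via quasiconvexity and the loxodromic action of $\beta$ only yields something like $d\le d(\mathcal{D}(X_{2n},B_-),x)+(mk+1)\,d(x,\beta x)+d(x,\mathcal{D}(X_{2n},B_+))$, whose slope $d(x,\beta x)$ in general strictly exceeds the stable translation length; so even coarse linearity with matching slopes is not automatic, let alone the exact affine law suggested by the highly-twisted case. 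For the genus, the Seifert-algorithm upper bound and the Corollary~\ref{cor:knot_genus_Hempel_distance} lower bound have no reason to share a slope, and the paper itself remarks that the Seifert bound is poor. Most seriously, the volume step rests on a two-sided volume--genus comparison that does not exist: the result of \cite{brittenham1998boundingcanonicalgenusbounds} bounds the volume \emph{above} by a linear function of the \emph{canonical} genus (which need not agree with the Seifert genus), and there is no general lower bound for volume in terms of genus, so affine control on $g(\widehat{\beta^{mk+1}})$ does not transport to a lower bound on volume in the direction you need. Until you supply sharp upper bounds on distance and genus with the correct slopes and an honest lower bound on volume (e.g.\ directly from the distance via the geometry of the knot exterior rather than through genus), the proposal establishes only that the entropy, a lower bound on the distance, and a lower bound on the genus grow linearly in $m$ --- which is essentially what the paper already records before stating the conjecture.
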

%%%%%
\vspace{1em}

We have demonstrated that for generic pseudo-Anosov braids with $1$-component plat closure with Hempel distance at least $3$, we have the infinite sequence of Theorem~\ref{introthm:infhypprimeknots}.  The assumption that $d(\widehat{\beta}, S) \geq 3$ is crucial; otherwise, our knot may not be hyperbolic.  Indeed, the pseudo-Anosov braid in Example \ref{example:3-bridgeconcreteexample} an unknot!  This leads to the following question: 

%%%%%
\begin{question}
    Can every link type be represented as the plat closure of a pseudo-Anosov braid? 
\end{question}
%%%%%
\vspace{1em}

%%%%%
%\begin{question}
%    There has been some recent research done on the relation between knot genus and crossing number of knots \cite{cohen2023distribution}.  It is conjectured (proven?) that the genus of a 2-bridge knot is linearly related to its crossing number.  Can the results of this paper be used to prove this?  
%\end{question}
%%%%%
%\vspace{1em}

The toolkit we've outlined in this paper may also have other wide-reaching applications.  There are deep relationships between links in bridge position and knotted surfaces; see \cite{MZ4mnflds} for more details.  The connections between tangles, shadows, and complexes related to a punctured sphere are further explored in a forthcoming paper from the first author.  Additionally, Hempel distance has $4$-dimensional analogs; see \cite{KT} and \cite{pants} for more.

%%%%%%%%%%%%%%%%%%%%%%%%%%%%%%%%%
%\newpage
\bibliographystyle{alpha}
\bibliography{bibliography1}

\newcommand{\etalchar}[1]{$^{#1}$}
\begin{thebibliography}{ABG{\etalchar{+}}23}

\bibitem[ABG{\etalchar{+}}23]{pants}
Román Aranda, Sarah Blackwell, Devashi Gulati, Homayun Karimi, Geunyoung Kim, Nicholas~Paul Meyer, and Puttipong Pongtanapaisan.
\newblock Pants distances of knotted surfaces in 4-manifolds, 2023.

\bibitem[AMR23]{hyperbolic_modifications}
Colin Adams, William~H. Meeks, III, and \'Alvaro~K. Ramos.
\newblock Modifications preserving hyperbolicity of link complements.
\newblock {\em Algebr. Geom. Topol.}, 23(5):2157--2190, 2023.

\bibitem[AS05]{DistancesofHeegaardsplittings}
Aaron Abrams and Saul Schleimer.
\newblock Distances of {H}eegaard splittings.
\newblock {\em Geom. Topol.}, 9:95--119, 2005.

\bibitem[BH73]{LiftsofcurvesBirmanHilden}
Joan~S. Birman and Hugh~M. Hilden.
\newblock On isotopies of homeomorphisms of {R}iemann surfaces.
\newblock {\em Ann. of Math. (2)}, 97:424--439, 1973.

\bibitem[BH95]{MR1308491}
M.~Bestvina and M.~Handel.
\newblock Train-tracks for surface homeomorphisms.
\newblock {\em Topology}, 34(1):109--140, 1995.

\bibitem[Bir69]{Birman_MCG_Bn}
Joan~S. Birman.
\newblock Mapping class groups and their relationship to braid groups.
\newblock {\em Comm. Pure Appl. Math.}, 22:213--238, 1969.

\bibitem[Bir76]{Birman_stable_eq}
Joan~S. Birman.
\newblock On the stable equivalence of plat representations of knots and links.
\newblock {\em Canadian J. Math.}, 28(2):264--290, 1976.

\bibitem[BJM13]{pAnotgenericref}
Ian Biringer, Jesse Johnson, and Yair Minsky.
\newblock Extending pseudo-{A}nosov maps into compression bodies.
\newblock {\em J. Topol.}, 6(4):1019--1042, 2013.

\bibitem[Bri98]{brittenham1998boundingcanonicalgenusbounds}
Mark Brittenham.
\newblock Bounding canonical genus bounds volume.
\newblock 1998.

\bibitem[BS05]{Bachman2003DistanceAB}
David Bachman and Saul Schleimer.
\newblock Distance and bridge position.
\newblock {\em Pacific J. Math.}, 219(2):221--235, 2005.

\bibitem[CW17]{pAbraidsareGeneric}
Sandrine Caruso and Bert Wiest.
\newblock On the genericity of pseudo-{A}nosov braids {II}: conjugations to rigid braids.
\newblock {\em Groups Geom. Dyn.}, 11(2):549--565, 2017.

\bibitem[FM12]{primeronmappingclassgroups}
Benson Farb and Dan Margalit.
\newblock {\em A primer on mapping class groups}, volume~49 of {\em Princeton Mathematical Series}.
\newblock Princeton University Press, Princeton, NJ, 2012.

\bibitem[Hal]{TrainsAlg}
Toby Hall.
\newblock Toby hall: Software for download.

\bibitem[Hem01]{HEMPEL2001}
John Hempel.
\newblock 3-manifolds as viewed from the curve complex.
\newblock {\em Topology}, 40(3):631--657, 2001.

\bibitem[Hil75]{Hilden_two_groups}
Hugh~M. Hilden.
\newblock Generators for two groups related to the braid group.
\newblock {\em Pacific J. Math.}, 59(2):475--486, 1975.

\bibitem[HTW98]{smalllinks}
Jim Hoste, Morwen Thistlethwaite, and Jeff Weeks.
\newblock The first 1,701,936 knots.
\newblock {\em Math. Intelligencer}, 20(4):33--48, 1998.

\bibitem[JM16]{bridgedistanceplat}
Jesse Johnson and Yoav Moriah.
\newblock Bridge distance and plat projections.
\newblock {\em Algebr. Geom. Topol.}, 16(6):3361--3384, 2016.

\bibitem[KT18]{KT}
Robion Kirby and Abigail Thompson.
\newblock A new invariant of 4-manifolds.
\newblock {\em Proc. Natl. Acad. Sci. USA}, 115(43):10857--10860, 2018.

\bibitem[LMP]{HighlyTwisted}
NIR LAZAROVICH, YOAV MORIAH, and TALI PINSKY.
\newblock Highly twisted diagrams.
\newblock {\em To appear in Algebraic \& Geometric Topology}.

\bibitem[Mal19]{malyutin}
Andrei~V. Malyutin.
\newblock Hyperbolic links are not generic, 2019.

\bibitem[MM99]{MM1999}
Howard~A. Masur and Yair~N. Minsky.
\newblock Geometry of the complex of curves. {I}. {H}yperbolicity.
\newblock {\em Invent. Math.}, 138(1):103--149, 1999.

\bibitem[MM04]{MMQuasiconvexity}
Howard~A. Masur and Yair~N. Minsky.
\newblock Quasiconvexity in the curve complex.
\newblock In {\em In the tradition of {A}hlfors and {B}ers, {III}}, volume 355 of {\em Contemp. Math.}, pages 309--320. Amer. Math. Soc., Providence, RI, 2004.

\bibitem[MZ18]{MZ4mnflds}
Jeffrey Meier and Alexander Zupan.
\newblock Bridge trisections of knotted surfaces in 4-manifolds.
\newblock {\em Proc. Natl. Acad. Sci. USA}, 115(43):10880--10886, 2018.

\bibitem[NS09]{genericpaheegaardsplittings}
Hossein Namazi and Juan Souto.
\newblock Heegaard splittings and pseudo-{A}nosov maps.
\newblock {\em Geom. Funct. Anal.}, 19(4):1195--1228, 2009.

\bibitem[OT13]{ozawa_takao_2013}
Makoto Ozawa and Kazuto Takao.
\newblock A locally minimal, but not globally minimal, bridge position of a knot.
\newblock {\em Math. Proc. Cambridge Philos. Soc.}, 155(1):181--190, 2013.

\bibitem[Pen88]{PennerPAconstruction}
Robert~C. Penner.
\newblock A construction of pseudo-{A}nosov homeomorphisms.
\newblock {\em Trans. Amer. Math. Soc.}, 310(1):179--197, 1988.

\bibitem[Sch54]{Schubert1954berEN}
Horst Schubert.
\newblock \"uber eine numerische {K}noteninvariante.
\newblock {\em Math. Z.}, 61:245--288, 1954.

\bibitem[Sol23]{solanki_plats_unlink}
Deepisha Solanki.
\newblock Studying links via plats: The unlink, 2023.

\bibitem[Thi22]{BraidsandDynamics}
Jean-Luc Thiffeault.
\newblock {\em Braids and Dynamics}, volume~9 of {\em Frontiers in Applied Dynamical Systems: Reviews and Tutorials}.
\newblock Springer, Cham, 2022.

\bibitem[Thu88]{NielsenThurston}
William~P. Thurston.
\newblock On the geometry and dynamics of diffeomorphisms of surfaces.
\newblock {\em Bulliten of the American Mathematical Society}, 19:417--431, 1988.

\end{thebibliography}
\vfill

\newcommand{\Addresses}{{% additional braces for segregating \footnotesize
  %\footnotesize
    Carolyn Engelhardt, \textsc{Department of Mathematics, University at Buffalo-SUNY,
    Buffalo, NY 14260-2900, USA}\par\nopagebreak
    \textit{E-mail address}: \texttt{cengelha@buffalo.edu}
    \vspace{1em}\\
    Seth Hovland, \textsc{Department of Mathematics, University at Buffalo-SUNY,
    Buffalo, NY 14260-2900, USA}\par\nopagebreak
  \textit{E-mail address}: \texttt{sethhovl@buffalo.edu}

}}

\Addresses

\end{document}